\newtheorem{thm}{Theorem}[section]
\newtheorem{cor}[thm]{Corollary}
\newtheorem{define}[thm]{Definition}
\newtheorem{lemma}[thm]{Lemma}
\newtheorem{assume}[thm]{Condition}
\newcommand{\n}{\nonumber}
\renewcommand{\th}{\theta}
\renewcommand{\a}{\alpha}
\newcommand{\nao}{\nabla^\bot}
\newcommand{\na}{\nabla^\bot \theta}
\renewcommand{\k}{\kappa}
\newcommand{\xx}{\xi(x,t)}
\newcommand{\xxy}{\xi (x+y,t)}
\newcommand{\yh}{\hat{y}}
\newcommand{\fn}{\|_{\mathcal{\dot{F}}^{\sigma} _{p_1 , q}}}
\newcommand{\bb}{\begin{equation}}
\newcommand{\ee}{\end{equation}}
\newcommand{\bq}{\begin{eqnarray}}
\newcommand{\eq}{\end{eqnarray}}
\newcommand{\bqn}{\begin{eqnarray*}}
\newcommand{\eqn}{\end{eqnarray*}}
\renewcommand{\Bbb}{\mathbb}
\def\pp{\partial}
\numberwithin{equation}{section}
\subjclass[2000]{35Q53, 35B35, 35B65, 76D03}
\keywords{generalized surface quasi-geostrophic equation, global regularity}
\begin{document}
\title[Generalized surface quasi-geostrophic equations]
{Dissipative models generalizing the 2D Navier-Stokes and the surface quasi-geostrophic equations}
\author[Dongho Chae, Peter Constantin and Jiahong Wu]{Dongho Chae$^{1}$, Peter Constantin$^{2}$ and Jiahong Wu$^{3}$}
\address{$^1$ Department of Mathematics,
Sungkyunkwan University,
Suwon 440-746, Korea}

\address{$^2$Department of Mathematics,
 University of Chicago,
 5734 S. University Avenue,
Chicago, IL 60637, USA.}

\address{$^3$Department of Mathematics,
Oklahoma State University,
401 Mathematical Sciences,
Stillwater, OK 74078, USA.}

\email{chae@skku.edu}
\email{const@cs.uchicago.edu}
\email{jiahong@math.okstate.edu}

\begin{abstract}
This paper is devoted to the global (in time) regularity problem for a family
of active scalar equations with fractional dissipation. Each component of the velocity field $u$ is determined by the active scalar $\theta$ through $\mathcal{R} \Lambda^{-1} P(\Lambda) \theta$ where $\mathcal{R}$ denotes a Riesz transform, $\Lambda=(-\Delta)^{1/2}$ and $P(\Lambda)$ represents a family of Fourier multiplier operators. The 2D Navier-Stokes vorticity equations correspond to the special case $P(\Lambda)=I$ while the surface quasi-geostrophic (SQG) equation to $P(\Lambda) =\Lambda$. We obtain the global regularity for a class of equations for which $P(\Lambda)$ and the fractional power of the dissipative Laplacian are required to satisfy an explicit condition. In particular, the active scalar equations with any fractional dissipation and with $P(\Lambda)= (\log(I-\Delta))^\gamma$ for any $\gamma>0$ are globally regular.
\end{abstract}

\maketitle

\section{Introduction}
\label{intr}
\setcounter{equation}{0}

This paper is devoted to the dissipative active scalar equation
\begin{equation} \label{general}
\left\{
\begin{array}{l}
\pp_t \theta + u\cdot\nabla \theta + \kappa (-\Delta)^\alpha \theta=0, \quad x\in \mathbb{R}^d, \, t>0, \\
u = (u_j), \quad u_j = \mathcal{R}_l \Lambda^{-1} P(\Lambda)\, \theta,\quad 1\le j, \,l\le d,
\end{array}
\right.
\end{equation}
where $\kappa>0$ and $\alpha>0$ are parameters, $\theta =\theta(x,t)$ is a scalar function of $x\in \mathbb{R}^{d}$ and $t\ge 0$, $u$ denotes a velocity field with each of its components $u_j$ ($1\le j\le d$) given by a Riesz transform $\mathcal{R}_l$ applied to $\Lambda^{-1} P(\Lambda)\, \theta$.
Here the operators $\Lambda = (-\Delta)^{\frac12}$, $P(\Lambda)$ and $\mathcal{R}_l$ are defined through their Fourier transforms,
$$
\widehat{\Lambda f}(\xi) = |\xi| \widehat{f}(\xi), \quad \widehat{P(\Lambda) f}(\xi) = P(|\xi|) \widehat{f}(\xi), \quad \widehat{\mathcal{R}_l f}(\xi)= \frac{i\,\xi_l}{|\xi|}\, \widehat{f}(\xi),
$$
where $1\le l\le d$ is an integer, $\widehat{f}$ or $\mathcal{F}(f)$ denotes the Fourier transform,
$$
\widehat{f}(\xi) = \mathcal{F}(f)(\xi) =\frac{1}{(2\pi)^{d/2}} \int_{\mathbb{R}^d} e^{-i x\cdot \xi} f(x)\,dx.
$$
We are primarily concerned with the global (in time) regularity issue concerning solutions of (\ref{general}) with a given initial data
\begin{equation} \label{IC}
\theta(x,0) =\theta_0(x), \quad x\in \mathbb{R}^d.
\end{equation}

\vskip .1in
A special example of (\ref{general}) is the 2D active scalar equation
\begin{equation} \label{general2d}
\left\{
\begin{array}{l}
\pp_t \theta + u\cdot\nabla \theta + \kappa (-\Delta)^\alpha \theta=0, \quad x\in \mathbb{R}^2, \, t>0, \\
u = \nabla^\perp \psi\equiv (-\pp_{x_2}\psi, \pp_{x_1} \psi), \quad
\Delta \psi = P(\Lambda)\, \theta
\end{array}
\right.
\end{equation}
which includes as special cases the 2D Navier-Stokes vorticity equation
\begin{equation}\label{euler}
\left\{
\begin{array}{l}
\pp_t \omega + u \cdot \nabla \omega-\nu \Delta \omega =0,\\
u =\nabla^\perp \psi, \quad \Delta\psi=\omega
\end{array}
\right.
\end{equation}
and the dissipative surface quasi-geostrophic (SQG) equation
\begin{equation}\label{SQG}
\left\{
\begin{array}{l}
\pp_t \theta + u \cdot \nabla \theta + \kappa (-\Delta)^\alpha \theta= 0,\\u=\nabla^\perp \psi, \quad -\Lambda\psi = \theta.
\end{array}
\right.
\end{equation}
There are numerous studies on the Navier-Stokes equations and the global regularity in the 2D case has long been established (see e.g. \cite{ConF}, \cite{DoGi} and \cite{MaBe}). The SQG equation models the dynamics of the potential temperature $\theta$ of the 3D quasi-geostrophic equations on the 2D horizontal boundaries and is useful in modeling atmospheric phenomena such as the frontogenesis (see e.g. \cite{CMT}, \cite{MaTa} and \cite{Pe}). The SQG equation (inviscid or dissipative) is
also mathematically important. As detailed in \cite{CMT}, the behavior of its strongly nonlinear solutions are strikingly analogous to that of the potentially singular solutions of the 3D incompressible Navier-Stokes
and the Euler equations. The global regularity issue concerning the SQG equation has recently been studied very extensively and many important progress has been made (see e.g. \cite{AbHm}, \cite{Bae}, \cite{Bar}, \cite{Blu}, \cite{CaS}, \cite{CV}, \cite{CaFe}, \cite{Ch}, \cite{ChJDE}, \cite{Cha}, \cite{Cha2}, \cite{Cha4}, \cite{CCCF}, \cite{ChL}, \cite{Cham}, \cite{CMZ1}, \cite{Chen}, \cite{Con}, \cite{CCW}, \cite{CIW}, \cite{CLS}, \cite{CMT}, \cite{CNS}, \cite{CWnew1}, \cite{CWnew2}, \cite{Cor}, \cite{CC}, \cite{CoFe1}, \cite{CoFe2}, \cite{CoFe3}, \cite{CFMR}, \cite{Dab}, \cite{DHLY}, \cite{DoCh}, \cite{Dong}, \cite{DoDu}, \cite{DoLi0}, \cite{DoLi}, \cite{DoPo}, \cite{DoPo2}, \cite{FPV}, \cite{FrVi}, \cite{Gil}, \cite{HPGS}, \cite{HmKe}, \cite{HmKe2}, \cite{Ju}, \cite{Ju2}, \cite{KhTi}, \cite{Ki1}, \cite{Ki2}, \cite{Kinew1}, \cite{KN1}, \cite{KN2}, \cite{KNV}, \cite{Li}, \cite{LiRo}, \cite{Maj}, \cite{MaBe}, \cite{MaTa}, \cite{Mar1}, \cite{Mar2}, \cite{Mar3}, \cite{MarLr}, \cite{May}, \cite{MayZ}, \cite{MiXu}, \cite{Mi}, \cite{NiSc}, \cite{OhYa1}, \cite{Pe}, \cite{ReDr}, \cite{Res}, \cite{Ro1}, \cite{Ro2}, \cite{Sch}, \cite{Sch2}, \cite{Si}, \cite{Si2}, \cite{Sta}, \cite{WaJi}, \cite{WaZh}, \cite{Wu97}, \cite{Wu2}, \cite{Wu01}, \cite{Wu02}, \cite{Wu3}, \cite{Wu4}, \cite{Wu41}, \cite{Wu31}, \cite{Wu77}, \cite{Yu}, \cite{Yuan}, \cite{YuanJ}, \cite{Zha0}, \cite{Zha}, \cite{Zhou}, \cite{Zhou2}). In particular, the global regularity for the critical case $\alpha=1/2$
has been successfully established (\cite{CV}, \cite{KNV}). The situation in the supercritical case $\alpha<1/2$ is only partially understood at the time of writing. The results in \cite{CWnew1}, \cite{CWnew2} and \cite{DoPo} imply that any solution of the supercritical SQG equation can develop potential finite time singularity only in the regularity window between $L^\infty$ and $C^\delta$ with $\delta<1-2\alpha$. Several very recent preprints on the supercritical case also revealed some very interesting properties of the supercritical dissipation (\cite{Bar}, \cite{Dab}, \cite{Kinew1}, \cite{Si}).

\vskip .1in
Our goal here is to establish the global regularity of (\ref{general}) for more general operators $P$. In particular, we are interested in the global regularity of the intermediate equations between the 2D Navier-Stokes equation and the supercritical SQG equation. This paper is a continuation of our previous study on the inviscid counterpart of (\ref{general}) (\cite{ChCW}).  The consideration here is restricted to $P$ satisfying the following condition.
\begin{assume} \label{P_con}
The symbol $P=P(|\xi|)$ assumes the following properties:
\begin{enumerate}
\item $P$ is continuous on $\mathbb{R}^d$ and $P\in C^\infty(\mathbb{R}^d\setminus\{0\})$;
\item $P$ is radially symmetric;
\item $P=P(|\xi|)$ is nondecreasing in $|\xi|$;
\item There exist two constants $C$ and $C_0$ such that
\begin{equation*}
\sup_{2^{-1} \le |\eta| \le 2}\, \left|(I-\Delta_\eta)^n \,P(2^j |\eta|)\right| \le C\, P(C_0 \, 2^j)
\end{equation*}
for any integer $j$ and $n=1,2,\cdots, 1+ \left[\frac{d}{2}\right]$.
\end{enumerate}
\end{assume}
We remark that (4) in Condition \ref{P_con} is a very natural condition on symbols of Fourier multiplier operators and is similar to the main condition in the Mihlin-H\"{o}rmander Multiplier Theorem (see e.g. \cite[p.96]{St}). For notational convenience, we also assume that $P\ge 0$. Some special examples of $P$ are
\begin{eqnarray*}
&& P(\xi) = \left(\log(1 +|\xi|^2)\right)^\gamma \quad\mbox{with $\gamma\ge 0$}, \\
&& P(\xi) = \left(\log(1+\log(1 +|\xi|^2))\right)^\gamma \quad\mbox{with $\gamma\ge 0$}, \\
&& P(\xi) = |\xi|^\beta \quad\mbox{with $\beta\ge 0$},\\
&& P(\xi) = (\log(1 +|\xi|^2))^\gamma\,|\xi|^\beta \quad\mbox{with $\gamma\ge 0$ and $\beta\ge 0$}.
\end{eqnarray*}

\vskip .1in
As in the study of the Navier-Stokes and the Euler equations, the quantity $\|\nabla u\|_{L^\infty}$ plays a crucial role in the global regularity issue. In our previous work on the inviscid counterpart of (\ref{general}), we established bounds for the building blocks $\|\nabla \Delta_j u\|_{L^q}$ and $\|\nabla S_N u\|_{L^q}$ for $1\le q\le \infty$. More precisely, the following theorem is proven in \cite{ChCW}.

\begin{thm} \label{nablau}
Let $u: \mathbb{R}^d\to \mathbb{R}^d$ be a vector field. Assume that $u$ is related to a scalar $\theta$ by
$$
(\nabla u)_{jk}  = \mathcal{R}_l \mathcal{R}_m\, P(\Lambda) \, \theta,
$$
where $1\le j,k,l,m\le d$, $(\nabla u)_{jk}$ denotes the $(j,k)$-th entry of $\nabla u$, $\mathcal{R}_l$ denotes the Riesz transform, and $P$ obeys Condition \ref{P_con}. Then, for any integers $j\ge 0$ and $N\ge 0$,
\begin{eqnarray}
\|S_N \nabla u\|_{L^p} &\le& C_{p,d}\, P(C_0 2^N)\,\|S_{N} \theta\|_{L^p}, \quad 1<p<\infty,
\label{bound1} \\
\|\Delta_j \nabla u\|_{L^q} &\le& C_d\,  P(C_0 2^j)\,\|\Delta_j \theta\|_{L^q}, \quad 1\le q\le \infty,
\label{bound2} \\
\|S_N \nabla u\|_{L^\infty} &\le&  C_d\,\|\theta\|_{L^1\cap L^\infty} + C_d\, N\,P(C_0 2^N)\,\|S_{N+1}\theta\|_{L^\infty}, \label{bound3}
\end{eqnarray}
where $C_{p,d}$ is a constant depending on $p$ and $d$ only and $C_d$s' depend on $d$ only.
\end{thm}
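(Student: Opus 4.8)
The plan is to read each of the three estimates as a statement about the Fourier multiplier
$$
m(\xi) = -\frac{\xi_l\xi_m}{|\xi|^2}\,P(|\xi|),
$$
which is the symbol of $\theta\mapsto(\nabla u)_{jk}=\mathcal{R}_l\mathcal{R}_m P(\Lambda)\theta$, and to exploit the frequency localization carried by the Littlewood--Paley blocks. Throughout I fix a standard dyadic partition of unity: a bump $\Phi$ supported in $\{2^{-1}\le|\xi|\le 2\}$ with $\widehat{\Delta_j f}(\xi)=\Phi(2^{-j}\xi)\widehat f(\xi)$, and a cutoff $\psi$ supported in $\{|\xi|\le 2\}$ with $\widehat{S_N f}(\xi)=\psi(2^{-N}\xi)\widehat f(\xi)$, together with fattened versions $\widetilde\Phi,\widetilde\psi$ equal to $1$ on the respective supports, so that $\Delta_j=\widetilde\Delta_j\Delta_j$ and $S_N=\widetilde S_N S_N$.

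The key preliminary step, on which all three estimates rest, is to convert the $(I-\Delta_\eta)^n$-bounds of Condition \ref{P_con}(4) into \emph{pointwise, scale-by-scale derivative bounds}
$$
\sup_{|\xi|\sim 2^k}\ |\xi|^{|\beta|}\,\bigl|\partial_\xi^\beta P(|\xi|)\bigr|\ \le\ C_d\,P(C_0 2^k),\qquad |\beta|\le 1+\Bigl[\tfrac d2\Bigr].
$$
I would obtain this by rescaling $\eta=2^{-k}\xi$ and invoking interior elliptic regularity for the elliptic operator $(I-\Delta_\eta)^n$ of order $2n=2(1+[d/2])$: the hypothesis controls $\|(I-\Delta_\eta)^n[P(2^k|\eta|)]\|_{L^\infty}$ on an annulus, which (after dividing by $P(C_0 2^k)$ and using monotonicity to bound the function itself) gives a uniform $H^{2n}$-bound on a slightly smaller annulus, and since $2n-\tfrac d2>1+[d/2]$ the Sobolev embedding returns uniform bounds on all derivatives of order $\le 1+[d/2]$. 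Undoing the scaling yields the displayed Mihlin-type estimate.

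With this in hand, \eqref{bound2} is the easiest: for $j\ge 0$ the symbol $\widetilde\Phi(2^{-j}\xi)m(\xi)$ is supported in an annulus bounded away from the origin, so I write $\Delta_j\nabla u=K_j*\Delta_j\theta$ with $\widehat{K_j}=\widetilde\Phi(2^{-j}\xi)m(\xi)$ and estimate $\|K_j\|_{L^1}\le C_d\,P(C_0 2^j)$; Young's inequality then gives \eqref{bound2} for all $1\le q\le\infty$. To bound $\|K_j\|_{L^1}$ I rescale to a fixed annulus, factor out $P(C_0 2^j)$, and use Cauchy--Schwarz in the form $\|\check g\|_{L^1}\le C\bigl(\int(1+|x|^2)^{-2n}dx\bigr)^{1/2}\|(I-\Delta_\eta)^n g\|_{L^2}$ with $n=1+[d/2]$ (the weight is integrable since $4n>d$); the preliminary derivative bounds together with the compact support control $\|(I-\Delta_\eta)^n g\|_{L^2}$ by a constant uniformly in $j$. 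Estimate \eqref{bound1} is the low-pass analogue, but now $M_N(\xi)=\widetilde\psi(2^{-N}\xi)m(\xi)$ is supported all the way down to $\xi=0$, where the Riesz factor is only homogeneous of degree $0$; its kernel carries a non-integrable $|x|^{-d}$ tail, so Young's inequality is unavailable and I instead verify the Mihlin--H\"ormander condition $|\partial^\beta M_N(\xi)|\le C_d\,P(C_0 2^N)|\xi|^{-|\beta|}$ for $|\beta|\le 1+[d/2]$ --- which is exactly the scale-by-scale bound above together with $P(|\xi|)\le P(C_0 2^N)$ on the support --- and conclude by the Mihlin--H\"ormander theorem, valid precisely for $1<p<\infty$.

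Finally, for \eqref{bound3} I split $S_N\nabla u=S_0\nabla u+\sum_{0\le j\le N-1}\Delta_j\nabla u$. On the high blocks I apply \eqref{bound2} with $q=\infty$ and the uniform $L^\infty$-boundedness of $\Delta_j$ to get $\|\Delta_j\nabla u\|_{L^\infty}\le C_d\,P(C_0 2^j)\|S_{N+1}\theta\|_{L^\infty}\le C_d\,P(C_0 2^N)\|S_{N+1}\theta\|_{L^\infty}$, and summing the $N$ terms produces the factor $N\,P(C_0 2^N)$. For the single low block $S_0\nabla u=K_0*\theta$ the symbol is compactly supported, so $K_0$ is smooth and bounded near the origin while inheriting $|x|^{-d}$ decay at infinity from the degree-$0$ singularity at $\xi=0$; splitting the convolution at $|y|=1$ and estimating the near part by $\|\theta\|_{L^\infty}$ and the far part by $\|\theta\|_{L^1}$ yields $\|S_0\nabla u\|_{L^\infty}\le C_d\|\theta\|_{L^1\cap L^\infty}$. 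The main obstacle is the preliminary step: Condition \ref{P_con}(4) controls only the special combinations $(I-\Delta_\eta)^n$, not individual derivatives, so the elliptic-regularity/Sobolev argument --- rather than a naive Leibniz expansion --- is what is needed to extract the Mihlin-type derivative bounds that drive all three estimates.
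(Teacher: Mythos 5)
First, a point of order: this paper does not contain a proof of Theorem \ref{nablau} at all --- it is imported from the companion work \cite{ChCW} (``the following theorem is proven in \cite{ChCW}''), so there is no in-paper argument to compare you against line by line. Judged on its own terms, your proposal is a correct and essentially complete proof, and its architecture is the natural one, in the same spirit as the cited source: an $L^1$ kernel bound plus Young's inequality for the dyadic blocks \eqref{bound2}; a Mihlin--H\"{o}rmander argument for the low-pass bound \eqref{bound1}, which is exactly where the restriction $1<p<\infty$ must enter; and a telescoping sum over blocks combined with a direct low-frequency kernel estimate for \eqref{bound3}. Your key contribution beyond the routine steps is the observation that Condition \ref{P_con}(4) controls only the combinations $(I-\Delta_\eta)^n P(2^j|\eta|)$, not individual derivatives, and that this can be upgraded to genuine Mihlin-type derivative bounds by rescaling, interior elliptic regularity for $(I-\Delta_\eta)^n$, and Sobolev embedding. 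That is precisely the step a naive Leibniz expansion cannot deliver, and you identified and solved it correctly; the only implicit ingredient is that Condition \ref{P_con}(4) at scales $j-1$, $j$, $j+1$, together with monotonicity of $P$, covers a slightly larger annulus, so that the shrunken annulus of the interior estimate still contains the support of the cutoff.

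Two small repairs are needed. First, there is a mismatch of derivative orders in your proof of \eqref{bound2}: with $n=1+[d/2]$, the quantity $\|(I-\Delta_\eta)^n g\|_{L^2}$ involves derivatives of $g$, hence of $P$, up to order $2n$, while your displayed preliminary bound controls only orders $\le n$. This is self-healing: your elliptic-regularity step actually produces a uniform $H^{2n}$ bound on the rescaled symbol on a smaller annulus, and since multiplication by the fixed smooth compactly supported factor (cutoff times Riesz symbol) is bounded on $H^{2n}$, that is what you should quote there. Alternatively, keep the pointwise bounds and run Cauchy--Schwarz with the weight $(1+|x|^2)^{-n/2}$, using $\|(1+|x|^2)^{n/2}\check g\|_{L^2}\sim\|g\|_{H^n}$ and $2n>d$; either fix is one line. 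Second, in \eqref{bound3} your estimate for the low block $S_0\nabla u$ carries the constant $C_d\int_{|\xi|\le 2}P(|\xi|)\,d\xi\le C_d\,P(2)$, which depends on $P$ and not only on $d$. This blemish is inherited from the statement itself (a constant genuinely independent of $P$ is impossible, as one sees by taking $P$ to be a large constant), and it is harmless in the application where $P$ is fixed, but you should state the dependence honestly rather than write $C_d$.
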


With the aid of these bounds, we were able to show in \cite{ChCW}
that (\ref{general}) with $\kappa=0$ and $P(\Lambda)=\left(\log(1+\log(1 -\Delta))\right)^\gamma$ for
$0\le \gamma\le 1$ has a unique global (in time) solution in the Besov space
$B^s_{q,\infty}(\mathbb{R}^d)$ with $d<q\le \infty$ and $s>1$. In addition, a
regularity criterion is also provided in \cite{ChCW} for (\ref{general}) with
$P(\Lambda) = \Lambda^\beta$ for $0\le \beta\le 1$. Our goal here is to extend our
study to cover more general operators when we turn on the dissipation. Indeed we
are able to establish the global existence and uniqueness for a very general family
of symbols. Before stating the result, we introduce the extended Besov spaces. Here $\mathcal{S}'$ denotes the class of tempered distributions and $\Delta_j$ with $j\ge -1$ denotes the standard Fourier localization operator. The notation $\Delta_j$, $S_N$ and Besov spaces are now quite standard and can be found in several books and many papers (see e.g. \cite{BL}, \cite{Che}, \cite{RuSi}, \cite{Tr}). They can also be found in Appendix A of \cite{ChCW}.

\begin{define}
Let $s\in \mathbb{R}$ and $1\le q,r\le \infty$. Let $A=\{A_j\}_{j\ge -1}$ with $A_j\ge 0$ be a nondecreasing sequence. The extended Besov space $B^{s,A}_{q,r}$ consists of $f\in \mathcal{S}'(\mathbb{R}^d)$ satisfying
$$
\|f\|_{B^{s,A}_{q,r}} \equiv  \left\|2^{s A_j}\, \|\Delta_j f\|_{L^q(\mathbb{R}^d)} \right\|_{l^r} \,< \infty.
$$
\end{define}

Obviously, when $A_j = j+1$, $B^{s,A}_{q,r}$ becomes the standard inhomogeneous Besov space $B^s_{q,r}$. When $A_j =o(j+1)$ as $j\to \infty$, $B^{s,A}_{q,r}$ is a less regular class than the corresponding Besov space $B^s_{q,r}$; we will refer to these spaces as sub-Besov spaces. When $j=o(A_j)$, $B^{s,A}_{q,r}$, we will refer to the spaces as super-Besov spaces.

\vskip .1in
With these definitions at our disposal, our main theorem can be stated as follows.
\begin{thm} \label{main1}
Consider the dissipative active scalar equation (\ref{general}) with $\kappa>0$, $\alpha>0$ and $P(\xi)$ satisfying Condition \ref{P_con}. Let $s>1$, $2\le q\le \infty$ and $A=\{A_j\}_{j\ge -1}$ be a nondecreasing sequence with $A_j\ge 0$. Let $\theta_0 \in L^1(\mathbb{R}^d)\cap L^\infty(\mathbb{R}^d) \cap B^{s,A}_{q,\infty}(\mathbb{R}^d)$. Assume either the velocity $u$ is divergence-free or the solution $\theta$ is bounded in $L^1(\mathbb{R}^d)\cap L^\infty(\mathbb{R}^d)$ for all time. If, there exists a constant $C$ such that for all $j\ge -1$,
\begin{equation}\label{sb}
\sum_{k\ge j-1, k\ge -1} \frac{2^{s A_{j-2}}\,P(2^{k+1})}{2^{s A_k}\,P(2^{j+1})} < C
\end{equation}
and
\begin{equation}\label{decay}
\kappa^{-1}\, 2^{s(A_j-A_{j-2})}\, (j+2) P(2^{j+2})\,2^{-2\alpha j} \to 0\quad \mbox{as} \quad j\to \infty,
\end{equation}
then (\ref{general}) has a unique global solution $\theta$ satisfying
$$
\theta \in L^\infty\left([0,\infty); B^{s,A}_{q,\infty}(\mathbb{R}^d)\right).
$$
\end{thm}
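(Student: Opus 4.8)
The plan is to propagate two a priori bounds along the evolution — a control of $\theta$ in $L^1\cap L^\infty$ and of $\theta$ in $B^{s,A}_{q,\infty}$ — through a Littlewood--Paley energy method, to construct the solution by a regularization/approximation scheme together with a compactness argument, and to obtain uniqueness from a difference estimate closed by the dissipation. First I would fix an approximation scheme (for instance a Friedrichs frequency cut-off or a mollification of the nonlinearity) producing smooth local-in-time solutions on which all the manipulations below are rigorous; by the usual continuation argument it then suffices to bound the two norms uniformly in the approximation parameter and for all $t>0$. The $L^1\cap L^\infty$ control is the elementary ingredient: when $\nabla\cdot u=0$ one tests \eqref{general} against $|\theta|^{p-2}\theta$ and uses the C\'ordoba--C\'ordoba pointwise lower bound for $(-\Delta)^\alpha$ to get $\|\theta(t)\|_{L^p}\le\|\theta_0\|_{L^p}$ for every $p\in[1,\infty]$, while in the non--divergence-free case this bound is exactly the standing hypothesis.

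The heart of the argument is the frequency-localized estimate. Applying $\Delta_j$ to \eqref{general}, testing against $|\Delta_j\theta|^{q-2}\Delta_j\theta$, and using that the transport term drops because $\nabla\cdot u=0$ (or is absorbed by the $L^\infty$ bound) together with the generalized Bernstein/positivity inequality $\int|\Delta_j\theta|^{q-2}\Delta_j\theta\,(-\Delta)^\alpha\Delta_j\theta\,dx\ge c\,2^{2\alpha j}\|\Delta_j\theta\|_{L^q}^q$ (valid for $q\ge2$), I obtain
\[
\frac{d}{dt}\|\Delta_j\theta\|_{L^q}+c\,\kappa\,2^{2\alpha j}\|\Delta_j\theta\|_{L^q}\le\|[\Delta_j,u\cdot\nabla]\theta\|_{L^q}.
\]
I would then expand the commutator by Bony's paraproduct into its diagonal, low--high and high--high pieces, estimating each occurrence of $\nabla u$ localized at frequency $2^k$ by Theorem \ref{nablau}; this is where the multiplier loss $P(C_0 2^k)$ — and, for the $L^\infty$ building block \eqref{bound3}, the extra factor $k$ — enters explicitly, always paired with the a priori $L^1\cap L^\infty$ control of $\theta$.

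Finally I multiply by the weight $2^{sA_j}$ and take the supremum over $j$. The two hypotheses act in tandem. Condition \eqref{sb} is exactly the weighted summability that collapses the off-diagonal frequency sum produced by the paraproduct back onto the single quantity $\|\theta\|_{B^{s,A}_{q,\infty}}$, so that no regularity is shed across scales despite the loss $P$ in the velocity. Condition \eqref{decay} guarantees that, after dividing the surviving high-frequency coefficient by the dissipative gain $\kappa\,2^{2\alpha j}$, the expression $\kappa^{-1}2^{s(A_j-A_{j-2})}(j+2)P(2^{j+2})2^{-2\alpha j}$ tends to $0$; hence for all $j$ beyond some threshold $J_0$ the nonlinear contribution is absorbed into half of the dissipation, while the finitely many modes $j<J_0$ form a closed system controlled by Gr\"onwall against $\|\theta\|_{L^1\cap L^\infty}$. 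This produces a global-in-time bound for $\sup_j 2^{sA_j}\|\Delta_j\theta(t)\|_{L^q}=\|\theta(t)\|_{B^{s,A}_{q,\infty}}$, uniform in the approximation; passing to the limit with an Aubin--Lions compactness argument for the nonlinear term gives the global solution. Uniqueness I would obtain from a difference estimate for $\theta_1-\theta_2$ of the same type, in which the velocity difference is again bounded through Theorem \ref{nablau} and the critical high-frequency contribution is absorbed by the dissipation, using that \eqref{decay} forces $P(\xi)=o(|\xi|^{2\alpha})$ as $|\xi|\to\infty$.

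The \emph{main obstacle} is the commutator estimate together with the weight bookkeeping. Since $u$ is one derivative rougher than $\theta$ and carries the multiplier loss $P$, the diagonal commutator and the high--high interaction are borderline, and the real work lies in checking that \eqref{sb} absorbs the frequency summation without losing a power of $2^{sA}$, and that the residual high-frequency coefficient is precisely the quantity appearing in \eqref{decay}. Matching these two structural conditions to the specific terms they are meant to dominate — rather than establishing any single inequality in isolation — is where all the care is needed.
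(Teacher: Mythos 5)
Your proposal follows essentially the same route as the paper's proof for $2\le q<\infty$: frequency localization, an $L^q$ energy estimate using the generalized Bernstein lower bound
$\int |\Delta_j\theta|^{q-2}\Delta_j\theta\,(-\Delta)^\alpha\Delta_j\theta\,dx \ge c\,2^{2\alpha j}\|\Delta_j\theta\|_{L^q}^q$,
a paraproduct decomposition of the nonlinear term whose pieces are estimated through Theorem \ref{nablau} (with the loss $P$ and the extra factor $j$ from \eqref{bound3}), condition \eqref{sb} to resum the high--high interactions, condition \eqref{decay} to absorb the modes $j\ge j_0$ into half the dissipation after integrating against the factor $e^{\kappa 2^{2\alpha j}t}$, and Gr\"onwall on the finitely many remaining modes. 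The differences in the construction of the solution (mollification plus Aubin--Lions compactness versus the paper's successive approximation scheme, both closed by a difference estimate in a weaker norm for uniqueness) are immaterial. Your handling of the transport term is also consistent with the paper's: when $u$ is not divergence free, the paper integrates by parts and bounds $\frac1q(\nabla\cdot S_j u)|\Delta_j\theta|^q$ using Theorem \ref{nablau} and the assumed $L^1\cap L^\infty$ control, which is what your parenthetical remark amounts to.

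There is, however, one genuine gap: the theorem includes $q=\infty$, and your central inequality is obtained by testing against $|\Delta_j\theta|^{q-2}\Delta_j\theta$, which you yourself qualify as ``valid for $q\ge 2$''. This duality/energy argument has no meaning at $q=\infty$, and no uniformity of the constants as $q\to\infty$ is claimed that would allow a limiting argument; the paper explicitly notes that ``the dissipative term is handled differently in these two cases.'' For $q=\infty$ the paper keeps the transport term $S_j u\cdot\nabla\Delta_j\theta$ on the left-hand side and invokes a frequency-localized maximum principle (Lemma \ref{localm}, taken from \cite{WaZh}), which yields directly
$$
\partial_t \|\Delta_j \theta\|_{L^\infty} + C\, 2^{2\alpha j} \|\Delta_j\theta\|_{L^\infty} \le \|J_1\|_{L^\infty} + \|J_2\|_{L^\infty} + \|J_4\|_{L^\infty} + \|J_5\|_{L^\infty},
$$
after which the estimates proceed as in the finite-$q$ case. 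Your proposal needs this (or an equivalent) additional ingredient to cover $q=\infty$; as written, it proves the theorem only for $2\le q<\infty$.
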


We single out two special consequences of Theorem \ref{main1}. In the case when
\begin{equation}\label{PA}
P(|\xi|) = \left(\log(I+|\xi|^2)\right)^\gamma,\,\, \gamma\ge 0\quad\mbox{and}\quad A_j=(j+1)^b\quad\mbox{for some $b\le 1$},
\end{equation}
(\ref{sb}) is trivially satisfied and the condition in (\ref{decay}) reduces to
\begin{equation}\label{ed}
2^{s((j+1)^b - j^b)}\, (j+2)^{1+\gamma} 2^{-2\alpha j} \to 0 \quad \mbox{as} \quad j\to \infty,
\end{equation}
which is obviously satisfied for any $\alpha>0$. We thus obtain the following corollary.
\begin{cor}
Consider the dissipative Log-Euler equation
\begin{equation} \label{Log-Euler}
\left\{
\begin{array}{l}
\pp_t \theta + u\cdot\nabla \theta + \kappa (-\Delta)^\alpha\theta =0, \\
u = \nabla^\perp \psi, \quad
\Delta \psi = \left(\log(1 -\Delta)\right)^\gamma\, \theta
\end{array}
\right.
\end{equation}
with $\kappa>0$, $\alpha>0$ and $\gamma\ge 0$. Assume that $\theta_0$ satisfies
$$
\theta_0 \in Y \equiv L^1(\mathbb{R}^2)\cap L^\infty(\mathbb{R}^2) \cap B^{s,A}_{q,\infty}(\mathbb{R}^2)
$$
with $s>1$, $2 \le q \le \infty$ and $A$ given in (\ref{PA}).  Then (\ref{Log-Euler}) has a unique global solution $\theta$  satisfying
$$
\theta \in L^\infty\left([0,\infty); Y\right).
$$
\end{cor}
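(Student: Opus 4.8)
The plan is to obtain the corollary as a direct specialization of Theorem \ref{main1}: all of the analytic work has already been carried out in the general theorem, so the task reduces to identifying the Log-Euler system (\ref{Log-Euler}) as an instance of (\ref{general}) and then verifying, for the concrete pair $P$ and $A$ in (\ref{PA}), the structural hypotheses of Theorem \ref{main1}. First I would record the reduction. In two dimensions the relation $u=\nabla^\perp\psi$ with $\Delta\psi=P(\Lambda)\theta$ is exactly the velocity law of (\ref{general2d}); since $\widehat{(1-\Delta)f}(\xi)=(1+|\xi|^2)\widehat f(\xi)$, the operator $P(\Lambda)=(\log(1-\Delta))^\gamma$ is the Fourier multiplier with symbol $P(|\xi|)=(\log(1+|\xi|^2))^\gamma$, the first example on the list following Condition \ref{P_con}. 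A useful free observation is that $u=\nabla^\perp\psi$ is automatically divergence free, so the alternative hypothesis of Theorem \ref{main1} that ``$u$ is divergence-free'' holds and need not be imposed separately; this is also what propagates the $L^1\cap L^\infty$ bound in time via the maximum principle for the dissipative transport equation.

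Next I would verify Condition \ref{P_con} for $P(|\xi|)=(\log(1+|\xi|^2))^\gamma$. Items (1)--(3) and the sign condition $P\ge 0$ are immediate from the explicit formula. The only point requiring computation is the Mihlin--H\"ormander-type bound (4). Writing $P(2^j|\eta|)=(\log(1+2^{2j}|\eta|^2))^\gamma$ and differentiating on the annulus $2^{-1}\le|\eta|\le 2$, each $\eta$-derivative of $\log(1+2^{2j}|\eta|^2)$ is bounded on that annulus independently of $j$, while $\log(1+2^{2j}|\eta|^2)\sim 2j\log 2$ there; hence every term produced by $(I-\Delta_\eta)^n$ with $n\le 1+[d/2]$ is controlled by a constant multiple of $(\log(1+2^{2j}))^\gamma$, which is comparable to $P(C_0\,2^j)$ for a suitable $C_0$. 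This is the genuinely computational step, though it is entirely routine.

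With Condition \ref{P_con} in hand I would check the two summation conditions for the choice $A_j=(j+1)^b$, $b\le 1$, using $P(2^{k+1})\sim(k+1)^\gamma$. For (\ref{decay}) the factor $2^{s(A_j-A_{j-2})}=2^{s((j+1)^b-(j-1)^b)}$ stays bounded, since $(j+1)^b-(j-1)^b$ remains bounded for $b\le 1$ (and tends to $0$ when $b<1$); the multiplier contributes only the power $(j+2)P(2^{j+2})\sim(j+2)^{1+\gamma}$, and the dissipation supplies the decisive factor $2^{-2\alpha j}$, so the whole product tends to $0$ for every $\alpha>0$, which is the content of the reduced condition (\ref{ed}). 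For (\ref{sb}) I would exploit that $2^{sA_k}=2^{s(k+1)^b}$ grows faster than the purely logarithmic $P$, so that after factoring out the prefactor $2^{sA_{j-2}}/P(2^{j+1})$ the tail $\sum_{k\ge j-1}P(2^{k+1})\,2^{-sA_k}$ is dominated by its leading term $k=j-1$, yielding a bound independent of $j$.

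Finally, having verified that $P$ obeys Condition \ref{P_con} and that $A_j=(j+1)^b$ satisfies (\ref{sb}) and (\ref{decay}), I would invoke Theorem \ref{main1} directly with $d=2$ to produce the unique global solution $\theta\in L^\infty([0,\infty);B^{s,A}_{q,\infty}(\mathbb{R}^2))$, which together with the propagated $L^1\cap L^\infty$ bound gives $\theta\in L^\infty([0,\infty);Y)$ as claimed. The only place demanding care is the uniform control of the sum in (\ref{sb}): one must confirm that the competition between the weight $2^{sA_k}$ and the slowly growing symbol $P(2^{k+1})$ produces a bound independent of $j$, and this is delicate precisely when $A_j$ grows more slowly than linearly, where the per-step decay of the summand degrades as $j\to\infty$. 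By contrast, condition (\ref{decay}) and Condition \ref{P_con}(4) are comparatively mechanical, the former being a clean instance of exponential decay dominating polynomial growth.
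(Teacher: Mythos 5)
Your overall route is exactly the paper's: obtain the corollary by specializing Theorem \ref{main1} to $P(|\xi|)=(\log(1+|\xi|^2))^\gamma$ and $A_j=(j+1)^b$, after noting that $u=\nabla^\perp\psi$ is divergence-free, that this $P$ satisfies Condition \ref{P_con}, and that (\ref{sb}) and (\ref{decay}) hold. Your verification of Condition \ref{P_con}(4) and of (\ref{decay}) is correct: $2^{s(A_j-A_{j-2})}$ stays bounded for $b\le 1$ while $(j+2)^{1+\gamma}2^{-2\alpha j}\to 0$, which is precisely the reduction to (\ref{ed}).

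The gap is in your verification of (\ref{sb}). You claim that the tail $\sum_{k\ge j-1}P(2^{k+1})\,2^{-sA_k}$ is dominated, uniformly in $j$, by its first term. That is true when $b=1$, because then the summand decays geometrically in $k$ (consecutive terms have ratio $\approx 2^{-s}<1$), but it is false for every $0\le b<1$: there the ratio of consecutive terms is $2^{-s((k+2)^b-(k+1)^b)}\bigl(1+o(1)\bigr)\to 1$, so there is no uniform geometric decay. Quantitatively, with $P(2^{k+1})\asymp (k+1)^\gamma$, an integral comparison (integrate $e^{-s\ln 2\,x^b}$ by parts) gives
$$
\sum_{k\ge j-1}(k+2)^\gamma\,2^{-s(k+1)^b}\;\asymp\;\frac{1}{s\,b\ln 2}\;j^{1-b}\,(j+1)^\gamma\,2^{-s j^b},
$$
so the tail exceeds its leading term by a factor $\sim j^{1-b}$. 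Since the prefactor $2^{sA_{j-2}}/P(2^{j+1})$ times that leading term stays bounded away from zero (because $s\bigl(j^b-(j-1)^b\bigr)\to 0$), the left-hand side of (\ref{sb}) grows like $j^{1-b}\to\infty$: condition (\ref{sb}) actually \emph{fails} for $0\le b<1$, and Theorem \ref{main1} cannot be invoked as stated in that range. Your argument therefore proves the corollary only for $b=1$, the standard Besov case. You are in good company---the paper itself dismisses (\ref{sb}) as ``trivially satisfied'' for all $b\le 1$, which is the same oversight---but since you explicitly singled out (\ref{sb}) as the one delicate point and then asserted a leading-term domination that does not hold, the gap is concretely present in your write-up: for $b<1$ one would have to re-do the estimate of the $J_5$ term in the proof of Theorem \ref{main1} (the only place (\ref{sb}) enters) by some other means, or restrict the statement to $b=1$.
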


The assumption that $A_j =(j+1)^b$ with $b\le 1$ corresponds to the Besov and the sub-Besov spaces. We can also consider the solutions of (\ref{Log-Euler}) in super-Besov spaces by taking $A_j = (j+1)^b$ for $b>1$. It is easy to see that (\ref{ed}) remains valid if $s\, b <2\alpha$. Therefore (\ref{Log-Euler}) with $2\alpha >s\, b$ has a global solution in the super-Besov space $B^{s,A}_{q,\infty}$ with $A_j=(j+1)^b$ for $b>1$.

\vskip .1in
Another very important special case is when
\begin{equation}\label{ajj}
A_j =j+1, \quad
P(\xi) = |\xi|^\beta (\log(1 +|\xi|^2))^\gamma\, \quad\mbox{with $\gamma\ge 0$ and $0\le \beta < 2\alpha\le 1$}.
\end{equation}
Then again (\ref{sb}) is obviously satisfied and (\ref{decay}) is reduced to
$$
2^{s((j+1)^b - j^b)} (j+2)^{1+\gamma}\, 2^{(\beta-2\alpha)j} \to 0 \quad \mbox{as}\,\,j\to \infty,
$$
which is clearly true. That is, the following corollary holds.
\begin{cor} \label{sss}
Consider the active scalar equation
\begin{equation} \label{BG}
\left\{
\begin{array}{l}
\pp_t \theta + u\cdot\nabla \theta + \kappa (-\Delta)^\alpha\theta=0, \\
u = \nabla^\perp \psi, \quad
\Delta \psi = \Lambda^\beta\,\left(\log(1 -\Delta)\right)^\gamma\, \theta
\end{array}
\right.
\end{equation}
with $\kappa>0$, $\alpha>0$, $0\le \beta< 2\alpha\le 1$ and $\gamma\ge 0$. Assume the initial data $\theta_0\in Y\equiv L^1(\mathbb{R}^2)\cap L^\infty(\mathbb{R}^2) \cap B^{s,A}_{q,\infty}(\mathbb{R}^2)$ with $s>1$, $2\le q\le \infty$ and $A_j$ given by (\ref{ajj}). Then (\ref{BG}) has a unique global solution $\theta$ satisfying
$$
\theta \in L^\infty\left([0,\infty); Y\right).
$$
\end{cor}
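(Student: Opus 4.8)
The plan is to obtain Corollary \ref{sss} as a direct specialization of Theorem \ref{main1}: I take $A_j = j+1$ and $P(\xi) = |\xi|^\beta(\log(1+|\xi|^2))^\gamma$ as in (\ref{ajj}), check that $P$ satisfies Condition \ref{P_con} and that the two structural conditions (\ref{sb}) and (\ref{decay}) hold for this data, and then read off the conclusion from the theorem. Note first that for (\ref{BG}) the velocity $u = \nabla^\perp\psi$ is automatically divergence-free, so the alternative hypothesis of Theorem \ref{main1} is satisfied without any additional argument.

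The first step is to verify Condition \ref{P_con} for $P(\xi)=|\xi|^\beta(\log(1+|\xi|^2))^\gamma$. Items (1)--(3) are immediate, since the product of $|\xi|^\beta$ and $(\log(1+|\xi|^2))^\gamma$ is continuous, $C^\infty$ away from the origin, radial, and nondecreasing in $|\xi|$ (each factor being nondecreasing). The substantive point is the Mihlin--H\"{o}rmander-type bound (4). Writing $P(2^j|\eta|) = 2^{\beta j}|\eta|^\beta(\log(1+2^{2j}|\eta|^2))^\gamma$, I would note that each application of an $\eta$-derivative either lowers the power of $|\eta|$ or differentiates the logarithm, the latter gaining a factor that on the annulus $2^{-1}\le|\eta|\le 2$ is dominated by a constant multiple of $(\log(1+2^{2j}))^{-1}$. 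Collecting the finitely many terms arising for $n\le 1+[d/2]$ yields a bound by $C\,2^{\beta j}(\log(1+2^{2j}))^\gamma = C\,P(C_0 2^j)$ for a suitable $C_0$, which is precisely (4). This is the same computation that certifies the standard examples listed after Condition \ref{P_con}.

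With $A_j = j+1$ the remaining two conditions become elementary. For (\ref{sb}), the weight ratio collapses to $2^{sA_{j-2}}/2^{sA_k} = 2^{-s(k-j+2)}$, while $\log(1+2^{2(k+1)})\le \log(1+2^{2(j+1)})+C(k-j)$ gives $P(2^{k+1})/P(2^{j+1})\le C\,2^{\beta(k-j)}(1+(k-j))^\gamma$; hence each summand is at most $C\,2^{(\beta-s)(k-j)}(1+(k-j))^\gamma$, and since $s>1>\beta$ the geometric decay beats the polynomial factor, so the sum over $k\ge j-1$ is bounded uniformly in $j$. For (\ref{decay}), $A_j-A_{j-2}=2$ reduces the left side to $\kappa^{-1}2^{2s}(j+2)P(2^{j+2})2^{-2\alpha j}\le C\kappa^{-1}(j+2)^{1+\gamma}2^{(\beta-2\alpha)j}$, which tends to $0$ as $j\to\infty$ because the strict inequality $\beta<2\alpha$ makes $2^{(\beta-2\alpha)j}$ decay exponentially and swamp the polynomial growth $(j+2)^{1+\gamma}$.

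Once both hypotheses are confirmed, Theorem \ref{main1} yields a unique global solution $\theta\in L^\infty([0,\infty);B^{s,A}_{q,\infty}(\mathbb{R}^2))$, and together with the $L^1\cap L^\infty$ persistence guaranteed by the theorem this gives $\theta\in L^\infty([0,\infty);Y)$, as claimed. The \emph{only} genuine obstacle is the derivative estimate (4) of Condition \ref{P_con}: one must track how the $\eta$-derivatives act on the logarithmic factor and confirm that they do not spoil the clean domination by $P(C_0 2^j)$. Given that, the chain of strict inequalities $\beta<2\alpha\le 1<s$ makes both (\ref{sb}) and (\ref{decay}) fall out immediately.
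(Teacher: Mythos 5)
Your proposal is correct and follows exactly the paper's own route: Corollary \ref{sss} is obtained by specializing Theorem \ref{main1} to $A_j=j+1$ and $P(\xi)=|\xi|^\beta(\log(1+|\xi|^2))^\gamma$, with the hypotheses (\ref{sb}) and (\ref{decay}) reducing to elementary geometric-series and decay statements that hold precisely because $s>1\ge 2\alpha>\beta$. The only difference is one of detail: the paper dismisses (\ref{sb}) as ``obviously satisfied'' and (\ref{decay}) as ``clearly true'' (and lists this $P$ among the examples satisfying Condition \ref{P_con} without proof), whereas you actually carry out these verifications, including the Mihlin--H\"{o}rmander-type derivative bound in item (4).
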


Again we could have studied the global solutions of (\ref{BG}) in a super-Besov space $B^{s,A}_{q,\infty}$ with, say $A_j =(j+1)^b$ for $b>1$. Of course we need to put more restrictions on $\alpha$.  When $\gamma=0$, (\ref{BG}) becomes
\begin{equation} \label{GBG}
\left\{
\begin{array}{l}
\pp_t \theta + u\cdot\nabla \theta + \kappa (-\Delta)^\alpha\theta=0, \\
u = \nabla^\perp \psi, \quad
\Delta \psi = \Lambda^\beta\, \theta,
\end{array}
\right.
\end{equation}
which we call the generalized SQG equation. Corollary \ref{sss} does not cover the case when $\beta=2\alpha$, namely the modified SQG equation.  The global regularity of the modified SQG equation with any $L^2$ initial data has previously been obtained in \cite{CIW}. In the supercritical case when $\beta>2\alpha$, the global regularity issue for (\ref{GBG}) is open. In particular, the global issue for supercritical SQG equation ($\beta=1$ and $2\alpha<1$) remains outstandingly open.

\vskip .1in
Following the ideas in \cite{Cha} and \cite{CMT}, we approach the global issue of (\ref{GBG}) in the super case $\beta>2\alpha$ by considering the geometry of the level curves of its solution. We present a geometric type criterion for the
regularity of solutions of (\ref{GBG}). This sufficient condition controls the
regularity of solutions in terms of the space-time integrability of
$|\nao \theta| $ and the regularity of the direction field
$\xi=\nao\theta/|\nao\theta|$ (unit tangent vector to a level curve
of $\theta$).

\begin{thm} \label{crit3}
Consider (\ref{GBG}) with $\kappa > 0$, $\alpha>0$ and $0\le
\beta\le 1$. Let $\theta$ be the solution of (\ref{GBG})
corresponding to the initial data $\theta_0 \in H^m(\mathbb{R}^2)$
with $m>2$. Let $T>0$. Suppose there exists $\sigma \in (0,1)$,
$q_1\in (\frac{2}{1+\beta-\sigma} , \infty]$, $p_1\in (1, \infty]$,
$p_2 \in (1, \frac{2}{1+\sigma-\beta} )$ and $r_1, r_2 \in [1, \infty]$
such that the followings hold.
\bq\label{con220}
 \xi\in L^{r_1}(0,T; \mathcal{\dot{F}}^\sigma_{p_1,q} (\mathbb R^2)) \quad \mbox{and}
 \quad \na \in L^{r_2} (0, T; L^{p_2} (\mathbb R^2 ))\\
 \mbox{with}\qquad
\frac{1}{p_1} + \frac{1}{p_2} + \frac{\a}{r_1} +\frac{\a}{r_2}
  \leq \a+\frac12(1+\sigma-\beta) .\n
  \eq
Then $\theta$ remains in $H^m (\mathbb R^2)$ on $[0,T]$.  Especially,
when $p_1=r_1=q=\infty$, (\ref{con220}) becomes
 \bqn \xi \in
L^\infty(0,T; C^\sigma (\mathbb R^2)) \quad\mbox{and}\quad \nabla^\perp
\theta
 \in L^{r_2} (0, T; L^{p_2} (\mathbb R^2 ))\quad \\
 \mbox{with}\qquad
 \frac{1}{p_2}  +\frac{\a}{r_2}
  \leq \a+\frac12(1+\sigma-\beta).
\eqn
\end{thm}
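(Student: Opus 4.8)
\emph{Plan.} The plan is to set up a Beale--Kato--Majda type continuation argument and reduce the persistence of $H^m$ regularity to a single a priori bound supplied by the geometric hypotheses. Since $m>2=1+\tfrac{d}{2}$, standard energy methods give a unique local solution on a maximal interval $[0,T^*)$, so it suffices to show that $\|\theta(\cdot,t)\|_{H^m}$ stays finite up to any $T\le T^*$ under (\ref{con220}). Applying $\Lambda^m$ to (\ref{GBG}), testing against $\Lambda^m\theta$, discarding the favorable dissipation and the divergence-free transport term, and invoking a Kato--Ponce commutator estimate yields
\bqn
\frac{d}{dt}\|\theta\|_{H^m}^2 \;\lesssim\; \big(\|\nabla u\|_{L^\infty}+\|\nabla^\perp\theta\|_{L^\infty}\big)\,\|\theta\|_{H^m}^2 .
\eqn
Because $u=\nabla^\perp\Lambda^{\beta-2}\theta$ and $0\le\beta\le1$, the map sending $\nabla^\perp\theta$ to $\nabla u$ is Calder\'on--Zygmund of nonpositive order (cf.\ Theorem \ref{nablau} with $P(\Lambda)=\Lambda^\beta$), so by Gronwall everything reduces to controlling $\int_0^T\|\nabla^\perp\theta\|_{L^\infty}\,dt$ (with the usual logarithmic refinement when $\beta=1$).

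The engine for this bound is a geometric $L^p$ estimate for $\omega\equiv\nabla^\perp\theta$. Differentiating (\ref{GBG}) gives
\bqn
\pp_t\omega+u\cdot\nabla\omega+\kappa\Lambda^{2\alpha}\omega=(\nabla u)\,\omega ,
\eqn
and testing against $|\omega|^{p-2}\omega$ eliminates the transport term while the C\'ordoba--C\'ordoba positivity inequality bounds the dissipation from below by $c\,\kappa\,\|\Lambda^\alpha|\omega|^{p/2}\|_{L^2}^2$. The stretching term becomes $\int|\omega|^p\,(\xi\cdot\nabla u\,\xi)\,dx$ with $\xi=\omega/|\omega|$, and here the geometry enters exactly as in \cite{CMT} and \cite{Cha}: since $\nabla u=\Lambda^{\beta-1}\mathcal{R}\,\omega$ has a convolution kernel of order $-(1+\beta)$ and contracts with the direction field, the diagonal contribution $\xi(x)=\xi(y)$ cancels, leaving a principal-value integral whose numerator carries the difference $\xi(x)-\xi(y)$.

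The technical heart is then a commutator/fractional-Leibniz estimate showing that this regularized singular integral is controlled by $\|\xi\|_{\mathcal{\dot{F}}^\sigma_{p_1,q}}$ times a fractional derivative of $|\omega|$ of order essentially $\beta-\sigma$; the Triebel--Lizorkin norm is the right instrument because its square-function structure matches the Littlewood--Paley decomposition of the kernel, and at the endpoint $p_1=q=\infty$ it reduces to the pointwise bound $|\xi(x)-\xi(y)|\le\|\xi\|_{C^\sigma}|x-y|^\sigma$. Inserting this, applying H\"older in space with the exponents $p_1,p_2$, and absorbing the fractional derivative of $|\omega|^{p/2}$ into the dissipative term $c\kappa\|\Lambda^\alpha|\omega|^{p/2}\|_{L^2}^2$ by interpolation and Young's inequality leaves a remainder that is integrable in time precisely when the powers balance; a concluding H\"older in time with $r_1,r_2$ then forces exactly the scaling relation $\frac1{p_1}+\frac1{p_2}+\frac{\alpha}{r_1}+\frac{\alpha}{r_2}\le\alpha+\tfrac12(1+\sigma-\beta)$. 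This closes a Gronwall inequality for $\|\omega(\cdot,t)\|_{L^{p}}$ together with an $L^2_t\dot H^\alpha$ bound on $|\omega|^{p/2}$; bootstrapping this gained dissipative smoothing upgrades the control to $\int_0^T\|\nabla^\perp\theta\|_{L^\infty}\,dt<\infty$, and the continuation criterion of the first step finishes the proof.

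\emph{Main obstacle.} The delicate point I expect is this commutator estimate and the matching of endpoints. One must quantify precisely how many derivatives of $|\omega|$ can be traded against the $\sigma$-regularity of $\xi$ while keeping the output in a Lebesgue exponent dual to the dissipation, and do so uniformly across frequencies; the admissible windows $p_1\in(\tfrac{2}{1+\beta-\sigma},\infty]$ and $p_2\in(1,\tfrac{2}{1+\sigma-\beta})$ are dictated exactly by the boundedness of this operator on $\mathcal{\dot{F}}^\sigma_{p_1,q}$. Verifying absorption into the dissipation without losing the endpoint cases---especially $p_1=q=\infty$, where $\mathcal{\dot{F}}^\sigma_{\infty,\infty}=C^\sigma$---and confirming that the dissipative bootstrap indeed reaches the $L^\infty$ threshold for $\nabla^\perp\theta$ are the two places where the argument is most sensitive.
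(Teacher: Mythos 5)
Your core estimate is essentially the paper's: differentiate (\ref{GBG}), test against $\nabla^\perp\theta\,|\nabla^\perp\theta|^{p-2}$, bound the dissipation from below by the C\'ordoba--C\'ordoba inequality, use the cancellation $\xi(x)\cdot\xi^\perp(x)=0$ to insert the difference $\xi(x+y)-\xi(x)$ into the stretching integral, and close with H\"older (exponents $p_1$, $p_2$, and $q,q'$ in the $y$-variable), Hardy--Littlewood--Sobolev and Young, which is precisely what generates the balance $\tfrac{1}{p_1}+\tfrac{1}{p_2}+\tfrac{\alpha}{r_1}+\tfrac{\alpha}{r_2}\le\alpha+\tfrac12(1+\sigma-\beta)$. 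Up to that point your proposal and the paper agree.

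The genuine gap is in how you open and close the argument. The paper does not reduce to the Beale--Kato--Majda quantity $\int_0^T\|\nabla^\perp\theta\|_{L^\infty}\,dt$; it reduces to the Serrin-type criterion of Theorem \ref{crit30} (Theorem 1.1 of \cite{Cha}, extended to $0\le\beta\le 1$): it suffices that $\nabla^\perp\theta\in L^r(0,T;L^p)$ with $\tfrac1p+\tfrac{\alpha}{r}\le\alpha$, $\tfrac1\alpha<p<\infty$, $1<r<\infty$. Because the dissipation lower bound is combined with the embedding $L^2_\alpha\hookrightarrow L^{2/(1-\alpha)}$, the Gronwall step delivers $\int_0^T\|\nabla^\perp\theta\|^p_{L^{p/(1-\alpha)}}\,dt<\infty$ for a fixed $p\ge \tfrac1\alpha$, and this verifies the Serrin condition immediately, since $\tfrac{1-\alpha}{p}+\tfrac{\alpha}{p}=\tfrac1p\le\alpha$; the proof ends there. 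You instead assert that ``bootstrapping this gained dissipative smoothing'' upgrades the bound to $\int_0^T\|\nabla^\perp\theta\|_{L^\infty}\,dt<\infty$. That one sentence is where all the remaining difficulty lives: passing from a scaling-critical space-time bound at a single finite $p$ (equivalently $\Lambda^\alpha|\omega|^{p/2}\in L^2_{t,x}$) to spatial $L^\infty$ control of $\nabla^\perp\theta$ is not a routine iteration when the dissipation is supercritical --- it is essentially the content of the regularity-criterion theorem itself, and you give no argument for it. In effect you have restated the criterion of \cite{Cha} as an unproven claim, while also aiming at a strictly stronger ($L^\infty$) endpoint than is needed. The repair is simple: drop the BKM reduction, invoke Theorem \ref{crit30} as the continuation criterion, and conclude directly from the gained $L^p_t L^{p/(1-\alpha)}_x$ bound.
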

Here $\dot{\mathcal{F}}^s_{p,q} (\mathbb{R}^2)$ denotes a homogeneous
Trebiel-Lizorkin type space. For $0\le s\le 1$, $1\le p\le \infty$ and $1\le q\le \infty$, $\dot{\mathcal{F}}^s_{p,q}$ contains functions such that the following semi-norm is finite,
$$
\|f\|_{\dot{\mathcal{F}}^s_{p,q}} = \left\{
\begin{array}{ll}
\displaystyle \left\|\left(\int \frac{|f(x+y)-f(x)|^q}{|y|^{n+sq}}\, dy\right)^{\frac1q}\right\|_{L^p}, \quad & \mbox{if $q<\infty$},\\\\
\displaystyle \left\|\sup_{y\not =0} \frac{|f(x+y)-f(x)|}{|y|^s}\right\|_{L^p}, \quad & \mbox{if $q=\infty$}
\end{array}
\right.
$$
We note that if we set $\beta=1$ in Theorem \ref{crit3}, then it reduces to Theorem 1.2 of \cite{Cha}.

\vskip .1in
The rest of this paper is divided into two sections. Section \ref{proofmain} proves Theorem \ref{main1} while Section \ref{geocri} derives the geometric regularity criterion stated in Theorem \ref{crit3}.

\vskip .4in
\section{Proof of Theorem \ref{main1}}
\label{proofmain}

This section is devoted to the proof of Theorem \ref{main1}, which involves Besov space technique and the bounds stated in Theorem \ref{nablau}. In addition, lower bound estimates associated with the fractional dissipation are also used.

\vskip .1in
\begin{proof}[Proof of Theorem \ref{main1}]
The proof is divided into two main parts. The first part establishes the global (in time) {\it a priori} bound on solutions of (\ref{general}) while the second part briefly describes the construction of a unique local (in time) solution.

\vskip .1in
For notational convenience, we write $Y= L^1(\mathbb{R}^d)\cap L^\infty(\mathbb{R}^d) \cap B^{s,A}_{q,\infty}(\mathbb{R}^d)$. The first part derives the global bound, for any $T>0$,
\begin{equation}\label{bdd}
\|\theta(\cdot,t)\|_{B^{s,A}_{q,\infty}} \le C(T, \|\theta_0\|_{Y}) \quad\mbox{for}\quad t\le T
\end{equation}
and we distinguish between two cases: $q<\infty$ and $q=\infty$. The dissipative term is handled differently in these two cases.

\vskip .1in
We start with the case when $q<\infty$. When the velocity field $u$ is divergence-free,  $\theta_0\in L^1 \cap L^\infty$ implies the corresponding solution $\theta$ of (\ref{general}) satisfies the {\it a priori} bound
\begin{equation} \label{mmm}
\|\theta(\cdot,t)\|_{L^1\cap L^\infty} \le \|\theta_0\|_{L^1\cap L^\infty},
\quad t \ge 0.
\end{equation}
When $u$ is not divergence-free, (\ref{mmm}) is assumed. The divergence-free condition is not used in the rest of the proof.

\vskip .1in
Let $j\ge -1$ be an integer. Applying $\Delta_j$ to (\ref{general}) and following a standard decomposition, we have
\begin{equation}\label{base1}
\partial_t \Delta_j \theta + \kappa (-\Delta)^\alpha \Delta_j \theta = J_1 + J_2 + J_3 +J_4 +J_5,
\end{equation}
where
\begin{eqnarray}
J_{1} &=& - \sum_{|j-k|\le 2}
[\Delta_j, S_{k-1}(u)\cdot\nabla] \Delta_k \theta, \label{j1t}\\
J_{2} &=& - \sum_{|j-k|\le 2}
(S_{k-1}(u) - S_j(u)) \cdot \nabla \Delta_j\Delta_k \theta, \label{j2t}\\
J_3   &=&  - S_j(u) \cdot\nabla \Delta_j\theta, \label{j3t}\\
J_{4} &=& - \sum_{|j-k|\le 2}
\Delta_j (\Delta_k u \cdot \nabla S_{k-1}
(\theta)), \label{j4t}\\
J_{5} &=& -\sum_{k\ge j-1}\Delta_j (\widetilde{\Delta}_k u\cdot\nabla
\Delta_k \theta) \label{j5t}
\end{eqnarray}
with $\widetilde{\Delta}_k = \Delta_{k-1} + \Delta_k + \Delta_{k+1}$. We multiply (\ref{base1}) by $\Delta_j\theta |\Delta_j \theta|^{q-2}$ and integrate in space. Integrating by parts in the term associated with $J_3$, we obtain
\begin{eqnarray*}
-\int_{\mathbb{R}^d} \left(S_j (u) \cdot\nabla \Delta_j\theta\right) \,\Delta_j\theta |\Delta_j \theta|^{q-2} \,dx &=& \frac1q \, \int_{\mathbb{R}^d} (\nabla\cdot S_j u) |\Delta_j \theta|^q \,dx\\
&=& \int_{\mathbb{R}^d} \widetilde{J_3}\, |\Delta_j \theta|^{q-1}\,dx,
\end{eqnarray*}
where $\widetilde{J_3}$ is given by
$$
\widetilde{J_3} = \frac1q (\nabla\cdot S_j u) |\Delta_j \theta|.
$$
Applying H\"{o}lder's inequality, we have
\begin{eqnarray}
&& \frac1q\,\frac{d}{dt} \|\Delta_j \theta\|^q_{L^q} + \kappa \int \Delta_j \theta |\Delta_j \theta|^{q-2}(-\Delta)^\alpha \Delta_j\theta\,dx  \label{root1}\\
&& \qquad\qquad \qquad \le \left(\|J_1\|_{L^q} + \|J_2\|_{L^q} + \|\widetilde{J_3}\|_{L^q} + \|J_4\|_{L^q} + \|J_5\|_{L^q}\right) \|\Delta_j \theta\|_{L^q}^{q-1}. \nonumber
\end{eqnarray}
For $j\ge 0$, we have the lower bound (see \cite{CMZ1} and \cite{Wu31})
\begin{equation}\label{low}
\int \Delta_j \theta |\Delta_j \theta|^{q-2}(-\Delta)^\alpha \Delta_j\theta \ge  C\, 2^{2\alpha j}\,\|\Delta_j \theta\|_{L^q}^q.
\end{equation}
For $j=-1$, this lower bound is invalid. Still we have
\begin{equation}\label{pos}
\int \Delta_j \theta |\Delta_j \theta|^{q-2}(-\Delta)^\alpha \Delta_j\theta \ge 0.
\end{equation}
Attention is paid to the case $j\ge 0$ first. Inserting (\ref{low}) in (\ref{root1}) leads to
$$
\frac{d}{dt} \|\Delta_j \theta\|_{L^q} + \kappa \, 2^{2\alpha j}\, \|\Delta_j \theta\|_{L^q} \le \|J_1\|_{L^q} + \|J_2\|_{L^q} + \|\widetilde{J_3}\|_{L^q} + \|J_4\|_{L^q} + \|J_5\|_{L^q}.
$$
By a standard commutator estimate,
$$
\|J_1\|_{L^q} \le C  \sum_{|j-k|\le 2} \|\nabla S_{k-1} u\|_{L^\infty} \|\Delta_k \theta\|_{L^q}.
$$
By H\"{o}lder's and Bernstein's inequalities,
$$
\|J_2\|_{L^q} \le C\, \|\nabla \widetilde{\Delta}_j u\|_{L^\infty} \, \|\Delta_j \theta\|_{L^q}.
$$
Clearly,
$$
\|\widetilde{J_3}\|_{L^q} \le C\,\|\nabla\cdot S_j u\|_{L^\infty} \, \|\Delta_j \theta\|_{L^q}.
$$
For $J_4$ and $J_5$, we have
\begin{eqnarray*}
\|J_4\|_{L^q} &\le&  \sum_{|j-k|\le 2} \|\Delta_k u\|_{L^\infty} \, \|\nabla S_{k-1} \theta\|_{L^q},\\
\|J_5\|_{L^q} &\le& \sum_{k\ge j-1} \,\|\widetilde{\Delta}_k u\|_{L^\infty} \| \Delta_k \nabla \theta\|_{L^q} \\
&\le& C\, \sum_{k\ge j-1} \|\nabla \widetilde{\Delta}_k u\|_{L^\infty}\, \|\Delta_k \theta\|_{L^q}.
\end{eqnarray*}
These terms can be further bounded as follows. By Theorem \ref{nablau},
\begin{eqnarray*}
\|\nabla S_k u\|_{L^\infty} & \le & \|\theta_0\|_{L^1\cap L^\infty} + C k\,P(2^{k+1})\|S_{k+1} \theta\|_{L^\infty}\\
&\le& \|\theta_0\|_{L^1\cap L^\infty} + C k\,P(2^{k+1}) \|\theta_0\|_{L^\infty}.
\end{eqnarray*}
Thus,
\begin{eqnarray*}
\|J_1\|_{L^q} &\le& C\, \|\theta_0\|_{L^1\cap L^\infty} \sum_{|j-k|\le 2} (1+ C k\,P(2^{k+1})) 2^{-s A_k} \, 2^{s A_k} \|\Delta_k \theta\|_{L^q}\\
&\le& C\, 2^{-s A_j}\,\|\theta_0\|_{L^1\cap L^\infty} \|\theta\|_{B^{s,A}_{q,\infty}}\,\sum_{|j-k|\le 2}(1+ C k\,P(2^{k+1})) 2^{s(A_j-A_k)}.
\end{eqnarray*}
Since $A_j$ is a nondecreasing function of $j$,
\begin{equation}\label{ajk}
2^{s (A_j -A_k)} \le 2^{s (A_j-A_{j-2})} \quad\mbox{for}\quad |k-j|\le 2,
\end{equation}
where we have adopted the convention that $A_l\equiv 0$ for $l<-1$. Consequently,
$$
\|J_1\|_{L^q} \le  C\, 2^{-s A_{j-2}}\,\|\theta_0\|_{L^1\cap L^\infty} \|\theta\|_{B^{s,A}_{q,\infty}}\,\left(1+ (j+2)P(2^{j+2})\right).
$$
Clearly, $\|J_2\|_{L^q}$ and $\|J_3\|_{L^q}$ admits the same bound
as $\|J_1\|_{L^q}$. By Bernstein's inequality and Theorem \ref{nablau},
\begin{eqnarray*}
\|J_4\|_{L^q} &\le& C\,\sum_{|j-k| \le 2} \|\nabla \Delta_k u\|_{L^q}\, \|S_{k-1} \theta\|_{L^\infty} \\
&\le& C\,\|\theta\|_{L^\infty} \sum_{|j-k| \le 2} P(2^{k+1}) \|\Delta_k \theta\|_{L^q}.
\end{eqnarray*}
By (\ref{ajk}), we have
$$
\|J_4\|_{L^q} \le C\, 2^{-s A_{j-2}}\,\|\theta_0\|_{L^\infty}\, \|\theta\|_{B^{s,A}_{q,\infty}}\,P(2^{j+2}).
$$
By Theorem \ref{nablau},
\begin{eqnarray*}
\|J_5\|_{L^q} &\le& C\, \sum_{k\ge j-1} P(2^{k+1}) \|\widetilde{\Delta}_k \theta\|_{L^\infty}\|\Delta_k \theta\|_{L^q}\\
 &\le& C\, \|\theta_0\|_{L^\infty} \sum_{k\ge j-1} P(2^{k+1}) \|\Delta_k \theta\|_{L^q}\\
&\le& C\, \|\theta_0\|_{L^\infty} 2^{-s A_{j-2}}\, P(2^{j+1}) \|\theta\|_{B^{s,A}_{q,\infty}}\, \sum_{k\ge j-1} \frac{2^{s A_{j-2}}}{P(2^{j+1})}\, \frac{P(2^{k+1})}{2^{s A_k}}
\end{eqnarray*}
By (\ref{sb}),
$$
\|J_5\|_{L^q}  \le C\, \|\theta_0\|_{L^\infty} 2^{-s A_{j-2}}\, P(2^{j+1}) \|\theta\|_{B^{s,A}_{q,\infty}}.
$$
Collecting all the estimates, we have, for $j\ge 0$,
\begin{eqnarray*}
\frac{d}{dt} \|\Delta_j \theta\|_{L^q} + \kappa \,2^{2\alpha j}\, \|\Delta_j \theta\|_{L^q} &\le&  C\, 2^{-s A_{j-2}}\,\|\theta_0\|_{L^1\cap L^\infty}\\
&& \,\times \|\theta\|_{B^{s,A}_{q,\infty}}\,\left(1+ (j+2)P(2^{j+2})\right).
\end{eqnarray*}
That is,
$$
\frac{d}{dt} \left(e^{\kappa 2^{2\alpha j} t} \|\Delta_j \theta\|_{L^q}\right) \le C\, e^{\kappa 2^{2\alpha j} t}2^{-s A_{j-2}}\,\|\theta_0\|_{L^1\cap L^\infty} \|\theta\|_{B^{s,A}_{q,\infty}}\,\left(1+ (j+2)P(2^{j+2})\right).
$$
Integrating in time and multiplying by $2^{s A_j}\cdot e^{ -\kappa 2^{2\alpha j} t}$, we obtain, for $j\ge 0$,
\begin{equation}\label{aj1}
2^{s A_j}\,\|\Delta_j \theta\|_{L^q}  \le 2^{s A_j}\,e^{ -\kappa 2^{2\alpha j} t}\|\Delta_j \theta_0\|_{L^q} + K_j,
\end{equation}
where
\begin{eqnarray*}
K_j = C\,\|\theta_0\|_{L^1\cap L^\infty} \,\left(1+ (j+2)P(2^{j+2})\right) 2^{s (A_j-A_{j-2})}\int_0^t e^{-\kappa 2^{2\alpha j} (t-\tau)} \|\theta(\tau)\|_{B^{s,A}_{q,\infty}}\,d\tau.
\end{eqnarray*}
To further the estimates, we fix $t_0\le T$ and let $t\le t_0$. It is easy to see that $K_j$ admits the upper bound
\begin{eqnarray*}
K_j  &\le & C\,\|\theta_0\|_{L^1\cap L^\infty} \,\left(1+ (j+2)P(2^{j+2})\right) 2^{s (A_j-A_{j-2})} \\
&& \quad \times \frac{1}{\kappa 2^{2\alpha j}} \left(1-e^{-\kappa 2^{2\alpha j} t} \right)\, \sup_{0\le \tau \le t_0} \|\theta(\tau)\|_{B^{s,A}_{q,\infty}}.
\end{eqnarray*}
According to (\ref{decay}), there exists an integer $j_0$
such that, for $j\ge j_0$,
\begin{equation}\label{bdd1}
K_j\le \frac12 \sup_{0\le \tau \le t_0} \|\theta(\tau)\|_{B^{s,A}_{q,\infty}}.
\end{equation}
For $0\le j\le j_0$,
\begin{equation}\label{bdd2}
K_j \le C\,\|\theta_0\|_{L^1\cap L^\infty} \,\left(1+ (j_0+2)P(2^{j_0+2})\right) \max_{0\le j\le j_0} 2^{s(A_j-A_{j-2})} \int_0^t \|\theta(\tau)\|_{B^{s,A}_{q,\infty}}\,d\tau.
\end{equation}
We now turn to the case when $j=-1$. By combining (\ref{base1}) and (\ref{pos}) and estimating $\|J_1\|_{L^q}$ through $\|J_5\|_{L^q}$ in an similar fashion as for the case $j\ge 0$, we obtain
\begin{equation}\label{aj2}
\|\Delta_{-1} \theta(t)\|_{L^q} \le \|\Delta_{-1} \theta(0)\|_{L^q} + C\,\|\theta_0\|_{L^1\cap L^\infty}\,\int_0^t \|\theta(\tau)\|_{B^{s,A}_{q,\infty}}\, d\tau.
\end{equation}
Putting (\ref{aj1}) and (\ref{aj2}) together, we find, for any $j\ge -1$,
\begin{equation}\label{aj3}
2^{s A_j}\,\|\Delta_j \theta\|_{L^q}  \le \|\theta_0\|_{B^{s,A}_{q,\infty}} + K_j,
\end{equation}
where $K_j$ obeys the bound (\ref{bdd1}) for $j\ge j_0$ and the bound in (\ref{bdd2}) for $-1\le j<j_0$. Applying $\sup_{j\ge -1}$ to (\ref{aj3}) and using the simple fact that
$$
\sup_{j\ge -1} K_j \le \sup_{j\ge j_0} K_j + \sup_{-1 \le j < j_0} K_j,
$$
we obtain
\begin{eqnarray*}
\|\theta(t)\|_{B^{s,A}_{q,\infty}} &\le& \|\theta_0\|_{B^{s,A}_{q,\infty}} + \frac12 \sup_{0\le \tau\le t_0} \|\theta(\tau)\|_{B^{s,A}_{q,\infty}} + C(\theta_0, j_0) \int_0^t \|\theta(\tau)\|_{B^{s,A}_{q,\infty}}\,d\tau,
\end{eqnarray*}
where
$$
C(\theta_0, j_0)  = C\,\|\theta_0\|_{L^1\cap L^\infty} \,\left(1+ (j_0+2)P(2^{j_0+2})\right) \max_{0\le j\le j_0} 2^{s(A_j-A_{j-2})}.
$$
Now taking supermum over $t\in[0,t_0]$, we obtain
$$
\sup_{0\le \tau\le t_0} \|\theta(\tau)\|_{B^{s,A}_{q,\infty}} \le 2\,\|\theta_0\|_{B^{s,A}_{q,\infty}} + C(\theta_0, j_0) \int_0^{t_0} \|\theta(\tau)\|_{B^{s,A}_{q,\infty}}\,d\tau,
$$
Gronwall's inequality then implies (\ref{bdd}) for any $t\le t_0\le T$. This finishes the case when $q<\infty$.

\vskip .1in
We now turn to the case when $q=\infty$. For $j\ge 0$, applying $\Delta_j$ yields
$$
\partial_t \Delta_j \theta + S_j u \cdot \nabla (\Delta_j \theta) + \kappa (-\Delta)^\alpha \Delta_j \theta = J_1 + J_2 + J_4 +J_5
$$
where $J_1$, $J_2$, $J_4$ and $J_5$ are as defined in (\ref{j1t}), (\ref{j2t}), (\ref{j4t}) and (\ref{j5t}), respectively. According to
Lemma \ref{localm} below, we have
\begin{equation} \label{mmmjjj}
\partial_t \|\Delta_j \theta\|_{L^\infty} + C\, 2^{2\alpha j} \|\Delta_j\theta\|_{L^\infty} \le \|J_1\|_{L^\infty} + \|J_2\|_{L^\infty} + \|J_4\|_{L^\infty} + \|J_5\|_{L^\infty}.
\end{equation}
The terms on the right can be estimated similarly as in the case when $q<\infty$. For $j=-1$, (\ref{mmmjjj}) is replaced by
$$
\partial_t \|\Delta_{-1} \theta\|_{L^\infty} \le \|J_1\|_{L^\infty} + \|J_2\|_{L^\infty} + \|J_4\|_{L^\infty} + \|J_5\|_{L^\infty}.
$$
The rest of the proof for this case is then very similar to the case $q<\infty$ and we thus omit further details.

\vskip .1in
We briefly describe the construction of a local solution of (\ref{general}) and prove its uniqueness. The solution is constructed through the method of successive approximation. More precisely, we consider a successive approximation sequence $\{\theta^{(n)}\}$ satisfying
\begin{equation}\label{succ}
\left\{
\begin{array}{l}
\theta^{(1)} = S_2 \theta_0, \\ \\
u^{(n)} = (u^{(n)}_j), \quad u^{(n)}_j = \mathcal{R}_l \Lambda^{-1} P(\Lambda) \theta^{(n)},\\ \\
\partial_t \theta^{(n+1)} + u^{(n)} \cdot\nabla \theta^{(n+1)} + \kappa (-\Delta)^\alpha\theta^{(n+1)} = 0,\\ \\
\theta^{(n+1)}(x,0) = S_{n+2} \theta_0
\end{array}
\right.
\end{equation}
and show that $\{\theta^{(n)}\}$ converges to a solution of (\ref{general}). It suffices to prove the following properties of $\{\theta^{(n)}\}$:
\begin{enumerate}
\item[i)] There exists $T_1>0$ such that $\theta^{(n)}$ is bounded uniformly in $B^{s,A}_{q,\infty}$ for any $t\in[0,T_1]$, namely
$$
\|\theta^{(n)}(\cdot,t)\|_{B^{s,A}_{q,\infty}} \le C_1 \|\theta_0\|_{Y}, \quad t\in [0,T_1],
$$
where $C_1$ is a constant independent of $n$.
\item[ii)] There exists $T_2>0$ such that $\eta^{(n+1)} = \theta^{(n+1)}- \theta^{(n)}$ is a Cauchy sequence in $B^{s-1,A}_{q,\infty}$,
$$
\|\eta^{(n)}(\cdot,t)\|_{B^{s-1,A}_{q,\infty}} \le C_2\, 2^{-n}, \quad t\in [0, T_2],
$$
where $C_2$ is independent of $n$ and depends on $T_2$ and $\|\theta_0\|_Y$ only.
\end{enumerate}
Since the essential ingredients in the proof of i) and ii) have appeared in proving the {\it a priori} bound, we omit the details. The uniqueness can be established by estimating the difference of any two solutions in $B^{s-1,A}_{q,\infty}$. A similar argument as in the proof of ii) would yield the desired result. This completes the proof of Theorem \ref{main1}.
\end{proof}

\vskip .1in
We have used the following lemma in the proof of Theorem \ref{main1}. It is obtained  in \cite{WaZh}.
\begin{lemma} \label{localm}
Let $j\ge 0$ be an integer. Let $\theta$, $u$ and $f$ be smooth functions solving the equation
$$
\partial_t \Delta_j \theta + u\cdot\nabla \Delta_j \theta + \kappa (-\Delta)^\alpha \Delta_j \theta =f,
$$
where $\kappa>0$ is a parameter. Assume that $\Delta_j \theta$ vanishes at infinity. Then, there exists a constant $C$ independent of $\theta$, $u$, $f$ and $j$ such that
$$
\partial_t \|\Delta_j \theta\|_{L^\infty} + C\, 2^{2\alpha j} \|\Delta_j \theta\|_{L^\infty} \le \|f\|_{L^\infty}.
$$
\end{lemma}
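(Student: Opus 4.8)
The plan is to reduce the asserted differential inequality to a single pointwise lower bound for the fractional Laplacian, evaluated at the spatial point where $|\Delta_j\theta|$ attains its maximum. Write $g=\Delta_j\theta$ and $M(t)=\|g(\cdot,t)\|_{L^\infty}$. Since $g$ is smooth and vanishes at infinity the maximum is attained, and $M(t)$ is locally Lipschitz in $t$; by the standard envelope argument for maxima of a smooth family, for a.e.\ $t$ one has $M'(t)=\partial_t g(\bar x,t)$, where $\bar x=\bar x(t)$ is a point at which $|g|$ is maximal, after fixing the sign (I treat the case $g(\bar x)=M$, the case $g(\bar x)=-M$ being symmetric). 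At such an interior extremum $\nabla g(\bar x)=0$, so the transport term drops out, $u\cdot\nabla g(\bar x)=0$. Evaluating the equation for $g$ at $(\bar x,t)$ then gives $M'(t)=f(\bar x)-\kappa(-\Delta)^\alpha g(\bar x)\le \|f\|_{L^\infty}-\kappa(-\Delta)^\alpha g(\bar x)$. Hence the lemma follows once we prove the pointwise bound $(-\Delta)^\alpha g(\bar x)\ge C\,2^{2\alpha j}M(t)$ with $C$ independent of $j$, $\theta$, $u$, $f$.

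For the pointwise bound I would first record the Córdoba--Córdoba singular-integral representation, valid for $0<\alpha<1$,
$$(-\Delta)^\alpha g(\bar x)=c_{d,\alpha}\int_{\mathbb{R}^d}\frac{M-g(\bar x+y)}{|y|^{d+2\alpha}}\,dy,$$
whose integrand is nonnegative because $M$ is the global maximum of $g$; in particular $(-\Delta)^\alpha g(\bar x)\ge 0$ automatically, and the entire point is to extract the Bernstein factor $2^{2\alpha j}$. I would then rescale by $x\mapsto 2^{-j}x$: setting $g_j(x)=g(2^{-j}x)$ one has $\widehat{g_j}$ supported in the fixed annulus $\{\tfrac12\le|\xi|\le 2\}$ and $(-\Delta)^\alpha g(\bar x)=2^{2\alpha j}(-\Delta)^\alpha g_j(2^j\bar x)$, so it suffices to prove the scale-one statement: there is $C>0$ such that every real $h$ with $\widehat h$ supported in the closed unit annulus and attaining its maximum value $m>0$ at a point $x_0$ satisfies $(-\Delta)^\alpha h(x_0)\ge C m$. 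I would establish this by a normalized compactness argument. If it failed there would exist $h_n$, normalized so that $m=1$ with the maximum attained at the origin (after translation), for which $(-\Delta)^\alpha h_n(0)\to 0$. Frequency localization to a fixed compact annulus makes $\|h_n\|_{L^\infty}\le 1$ control all derivatives uniformly via Bernstein, so along a subsequence $h_n\to h_\infty$ in $C^\infty_{\mathrm{loc}}$, with $h_\infty(0)=1$ its global maximum and $\widehat{h_\infty}$ still supported in the annulus. Applying Fatou's lemma to the nonnegative integrands gives
$$c_{d,\alpha}\int_{\mathbb{R}^d}\frac{1-h_\infty(y)}{|y|^{d+2\alpha}}\,dy\le \liminf_n (-\Delta)^\alpha h_n(0)=0,$$
forcing $h_\infty\equiv 1$; but a nonzero constant has Fourier transform supported at the origin, contradicting the annular support. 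This contradiction yields the uniform constant $C$.

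I expect the main obstacle to be exactly the uniformity of $C$ in $j$ and over the admissible class of $g$, which is precisely what the scaling-plus-compactness step is designed to finesse. The alternative, fully quantitative route --- using $\int g=0$ together with the Bernstein bound $\|\nabla g\|_{L^\infty}\lesssim 2^j\|g\|_{L^\infty}$ to exhibit a region of definite size, at distance $\sim 2^{-j}$ from $\bar x$, on which $g$ is bounded away from $M$, and then integrating the nonnegative kernel over that region --- is more elementary but demands careful bookkeeping of the competing contributions, which is why I prefer the compactness argument. A secondary technical point is the rigorous justification of $M'(t)=\partial_t g(\bar x,t)$ for a.e.\ $t$, which I would dispatch by the standard Lipschitz/Rademacher argument for parametrized maxima. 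Finally, the kernel argument above is cleanest for $0<\alpha<1$, which already covers the dissipation ranges used in the corollaries (there $2\alpha\le 1$); for general $\alpha>0$ one argues instead from the frequency-localized identity $(-\Delta)^\alpha g=2^{2\alpha j}\,\Psi_j* g$, with $\Psi_j(y)=2^{dj}\Psi(2^j y)$ and $\widehat\Psi(\xi)=|\xi|^{2\alpha}\widehat\phi(\xi)$ for a fixed $\phi$ satisfying $\widehat\phi\equiv 1$ on the annulus of $\Delta_j$, combined with the same maximum-point positivity and compactness normalization to recover the factor $2^{2\alpha j}$.
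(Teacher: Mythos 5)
The paper offers no proof of Lemma \ref{localm} at all: it is quoted verbatim from \cite{WaZh}. So any self-contained argument is automatically ``a different route,'' and for $0<\alpha<1$ yours is essentially sound: the reduction to the maximum point (transport term vanishes there, sign fixed by replacing $g$ with $-g$), the C\'ordoba--C\'ordoba representation with nonnegative integrand, the rescaling to a fixed annulus, and the compactness/Fatou rigidity all go through, modulo technicalities you correctly flag (validity of the singular-integral representation for bounded, non-decaying functions with annular spectrum; the Dini-derivative argument in $t$, where what one gets and needs is the inequality $M'(t)\le \partial_t g(\bar x,t)$ a.e.\ rather than equality). Note also that your ``alternative, fully quantitative route'' is essentially the actual proof in \cite{WaZh}: one restricts the nonnegative integrand to $|y|\ge A2^{-j}$, which yields the main term $c\,(A2^{-j})^{-2\alpha}M$, and controls $\int_{|y|\ge A2^{-j}}g(\bar x+y)|y|^{-d-2\alpha}\,dy$ by the vanishing moments of the fattened Littlewood--Paley kernel, then takes $A$ large. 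Your compactness argument trades that bookkeeping for softness at the price of a non-explicit constant; both are fine on $\alpha\in(0,1)$.

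The genuine gap is the case $\alpha\ge 1$, which you cannot wave off: the lemma is invoked in the proof of Theorem \ref{main1} (case $q=\infty$) for every $\alpha>0$, e.g.\ for the Log-Euler equation (\ref{Log-Euler}) where $\alpha>1$ is allowed. Your proposed fix---the identity $(-\Delta)^\alpha g=2^{2\alpha j}\Psi_j*g$ ``combined with the same maximum-point positivity''---cannot work, because there is no maximum-point positivity in that regime: $\widehat\Psi(0)=0$ forces $\int\Psi=0$, so $\Psi$ changes sign and $(\Psi_j*g)(\bar x)$ has no sign at a maximum. Worse, the pointwise bound you would need is false for $\alpha>1$, even for annular-spectrum functions. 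Take
\begin{equation*}
h(x)=\cos x_1-\varepsilon\cos 3x_1,\qquad 9^{-\alpha}<\varepsilon<\tfrac19 .
\end{equation*}
Writing $c=\cos x_1$, one has $h=(1+3\varepsilon)c-4\varepsilon c^3$, which is odd in $c$ and increasing on $[-1,1]$ when $\varepsilon<1/9$; hence $\|h\|_{L^\infty}=h(0)=1-\varepsilon$ is attained at $x=0$, and yet $\Lambda^{2\alpha}h(0)=1-\varepsilon\,9^{\alpha}<0$. The spectrum $\{\pm1,\pm3\}$ (rescaled, and mollified so the function vanishes at infinity while its max stays at the origin) fits inside a single Littlewood--Paley annulus, so such a configuration is realized by $\Delta_j\theta_0$ for suitable $\theta_0$; taking $u\equiv0$, $f\equiv 0$, the sup norm then strictly \emph{increases} near $t=0$, so no proof can deliver the asserted differential inequality for $\alpha>1$---this is precisely the positivity-preserving range $\alpha\le1$ of $e^{-t\Lambda^{2\alpha}}$. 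Even at $\alpha=1$ the uniform constant degenerates (the same example with $\varepsilon=1/9$ gives $-\Delta h(0)=0$ at the maximum), so your rigidity step fails there too. You should therefore state and prove the lemma for $0<\alpha<1$ (the range of \cite{WaZh}); the extension to all $\alpha>0$ is not a technicality to be finessed, and its failure is in fact a point the paper itself glosses over by citing \cite{WaZh} for unrestricted $\alpha$.
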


\vskip .3in
\section{Geometric regularity criterion}
\label{geocri}

\vskip .06in
In this section we prove Theorem \ref{crit3}. For this we recall the
following Serrin type of criterion, which is proved for $\beta=1$ in
\cite[Theorem 1.1]{Cha},
and obviously holds true for our case of  $\beta\in [0, 1]$.
\begin{thm}\label{crit30} Let $\th (x,t)$ be a solution of (\ref{GBG}) with
initial data $\theta_0\in H^m (\mathbb{R}^2)$ with $m>2$. Let $T>0$. If
there are indices $p,r$ with $\frac{1}{\a}<p<\infty$ and
$1<r<\infty$ respectively such that
 \bb\label{thm1}
\nao \th \in L^r (0,T; L^p (\Bbb R^2 )) \quad \mbox{ with}\quad
 \frac{1}{p} +\frac{\a}{r}\leq \a,
 \ee
 then $\theta$ remains in $H^m (\Bbb R^2)$ on $[0,T]$.
 \end{thm}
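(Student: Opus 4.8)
The plan is to prove an \emph{a priori} bound on $\|\th(\cdot,t)\|_{H^m}$ on $[0,T]$ under hypothesis (\ref{thm1}); once that is in hand, a standard continuation argument for (\ref{GBG}) upgrades it to persistence of $H^m$ regularity. First I would run the $\dot H^m$ energy estimate. Applying $\la^m$ to (\ref{GBG}), pairing with $\la^m\th$ in $L^2$, and recording the dissipation gives
\begin{equation*}
\frac12\frac{d}{dt}\|\th\|_{\dot H^m}^2 + \k\,\|\th\|_{\dot H^{m+\a}}^2 = -\intr\la^m(u\cdot\nabla\th)\,\la^m\th\,dx =: N .
\end{equation*}
Since $u=\nao\psi$ is divergence-free, $\intr u\cdot\nabla(\la^m\th)\,\la^m\th\,dx=0$, so the top-order transport term cancels and $N$ reduces to a commutator, which I would then rewrite using self-adjointness of $\la^{\a}$ as
\begin{equation*}
N = -\intr\big([\la^m,u\cdot\nabla]\th\big)\,\la^m\th\,dx = -\intr\la^{-\a}\big([\la^m,u\cdot\nabla]\th\big)\,\la^{m+\a}\th\,dx .
\end{equation*}
The crucial structural input is the velocity law: from $\Delta\psi=\la^\beta\th$ one has $u=\nao\Delta^{-1}\la^\beta\th=-\la^{\beta-2}\na$, so $u$ is recovered from $\na$ by a smoothing operator of order $2-\beta\ge1$. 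In particular, for $\beta\le1$ the velocity is at least as regular as in the SQG case $\beta=1$ treated in \cite{Cha}.

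The heart of the matter is to bound the commutator by the controlled quantity $\|\na\|_{L^p}$. Using a Littlewood--Paley/paraproduct decomposition together with the smoothing of $u$, I expect an estimate of the form
\begin{equation*}
\big\|[\la^m,u\cdot\nabla]\th\big\|_{\dot H^{-\a}} \le C\,\|\na\|_{L^p}\,\|\th\|_{\dot H^m}^{a}\,\|\th\|_{\dot H^{m+\a}}^{b-1},\qquad a+b=2,
\end{equation*}
where the pair $(a,b)$ is fixed by interpolation and Sobolev embedding in terms of $p,\a,\beta$. Inserting this into $N$ gives $|N|\le C\,\|\na\|_{L^p}\,\|\th\|_{\dot H^m}^a\,\|\th\|_{\dot H^{m+\a}}^b$, and Young's inequality (splitting the powers so that $\tfrac{2}{2-b}$ lands on $\|\na\|_{L^p}$) yields
\begin{equation*}
|N| \le \frac\k2\,\|\th\|_{\dot H^{m+\a}}^2 + C\,\|\na\|_{L^p}^{\,r}\,\|\th\|_{\dot H^m}^2,\qquad r=\tfrac{2}{2-b}.
\end{equation*}
The exponent count is exactly the point: requiring that the $\dot H^{m+\a}$ factor be absorbable and that the residual power of $\|\th\|_{\dot H^m}$ equal $2$ forces $r$ to meet precisely the relation $\tfrac1p+\tfrac\a r\le\a$ of (\ref{thm1}), with equality at the scaling-critical endpoint and the strict case following a fortiori by interpolating the integrability downward.

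Finally I would close the estimate. Absorbing the dissipative term leaves
\begin{equation*}
\frac{d}{dt}\|\th\|_{\dot H^m}^2 + \k\,\|\th\|_{\dot H^{m+\a}}^2 \le C\,\|\na(\cdot,t)\|_{L^p}^{\,r}\,\|\th\|_{\dot H^m}^2,
\end{equation*}
and since $t\mapsto\|\na(\cdot,t)\|_{L^p}^{\,r}\in L^1(0,T)$ by (\ref{thm1}), Gronwall's inequality bounds $\|\th(\cdot,t)\|_{\dot H^m}$ on $[0,T]$; combined with the basic $L^2$ and $L^\infty$ bounds from the transport-plus-dissipation maximum principle, this gives the full $H^m$ bound and hence persistence of regularity. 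The hard part will be the commutator estimate of the second paragraph: one must split the high-high, high-low and low-high frequency interactions, place $\na$ in $L^p$ while distributing the remaining derivatives between $\dot H^m$ and $\dot H^{m+\a}$ by fractional Leibniz and Gagliardo--Nirenberg, and verify that the resulting exponents satisfy $a+b=2$ with $r=\tfrac{2}{2-b}$ obeying the scaling condition. Because $u=-\la^{\beta-2}\na$ is smoother for $\beta<1$ than the order-zero SQG velocity, every term in this decomposition is controlled at least as well as in \cite{Cha}, which is why the criterion established there for $\beta=1$ carries over verbatim to all $\beta\in[0,1]$.
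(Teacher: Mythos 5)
First, a point of context: the paper itself never proves this theorem. It is imported from \cite{Cha} (Theorem 1.1 there, the case $\beta=1$), together with the one-line remark that the case $\beta\in[0,1)$ follows because the velocity is then smoother. Your outline is in fact the same strategy as that cited proof: $H^m$ energy estimate, cancellation of the top-order transport term by incompressibility, a commutator bound in terms of $\|\nabla^\perp\theta\|_{L^p}$, interpolation between $\dot H^m$ and $\dot H^{m+\alpha}$, Young's inequality to absorb the dissipation, and Gronwall. Your exponent bookkeeping is also self-consistent: with $a+b=2$, the Young step does return exactly the power $2$ on $\|\theta\|_{\dot H^m}$ with $r=2/(2-b)$; for $\beta=1$ the scaling value $b=2/(p\alpha)$ makes $1/p+\alpha/r=\alpha$ exact, and the absorbability requirement $b<2$ encodes precisely the hypothesis $p>1/\alpha$.

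The genuine gap is that the commutator estimate --- which is the entire content of the theorem --- is only ``expected,'' with $(a,b)$ never determined and no frequency decomposition actually carried out. Moreover, as stated it cannot be right in general: whenever $p>2/\alpha$ one has $b<1$, so your claimed bound on $\|[\Lambda^m,u\cdot\nabla]\theta\|_{\dot H^{-\alpha}}$ carries the \emph{negative} power $b-1$ of $\|\theta\|_{\dot H^{m+\alpha}}$, which is not a meaningful upper bound; the estimate has to be formulated directly on the trilinear integral $N$, or run through a Kato--Ponce commutator estimate in Lebesgue spaces combined with Gagliardo--Nirenberg interpolation (which is how the proof in \cite{Cha} is organized). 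A second soft spot is the closing claim that for $\beta<1$ every term is controlled ``at least as well \ldots verbatim'' because $u=-\Lambda^{\beta-2}\nabla^\perp\theta$ is smoother: the operator $\Lambda^{\beta-2}\nabla^\perp$ is homogeneous of negative order $\beta-1$, so the bound $\|\nabla u\|_{L^p}\lesssim\|\nabla^\perp\theta\|_{L^p}$, which you implicitly need when placing $\nabla u$ in $L^p$, is false for $\beta<1$ (negative-order Riesz potentials are not $L^p$-bounded); the low-frequency part of $u$ must be handled separately using the $L^1\cap L^\infty$ maximum-principle bounds on $\theta$. The same caveat is hidden in the paper's word ``obviously,'' and it is repairable, but not verbatim. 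In short: right strategy, correct scaling algebra, but the one estimate on which everything rests is missing, and its stated form needs repair before it could even be proved.
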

\vskip .1in
\begin{proof}[Proof of Theorem \ref{crit3}] Since the proof is
similar to that of Theorem 1.2 in \cite{Cha}, we will be brief here
mostly pointing out the essential changes.
 Let $p$ be an integer of the form $p=2^k$, where $k$ is a positive
 integer, and satisfy
 \bb\label{first}
 \frac{1}{\a} \leq p <\infty.
 \ee
We take operation of $\nabla^\bot$ on (\ref{GBG}), and take $L^2
(\mathbb R^2)$ inner product of it
 by \newline
 $\nao \th (x,t) |\nao \th (x,t)|^{p-2}$, and then
 substituting $u=-\nabla^\bot \Lambda^{-2+\beta} \theta $ into it, we have
 \bq\label{ve}
\lefteqn{\frac{1}{p} \frac{d}{dt} \|\nao \th(t)\|_{L^p} ^p +\k\int
 (\Lambda ^{2\a}\na )\cdot \na |\na |^{p-2} dx}\hspace{0.0in}\n \\
&&=\int (\nao \th \cdot \nabla )u \cdot \nao  \th  |\na |^{p-2}dx\n
\\
 && = \int \int [\nabla \th (x,t)\cdot \yh ] [\nao \th
(x+y,t)\cdot
\nabla \th (x,t )]\frac{dy}{|y|^{1+\beta}} |\na  (x,t)|^{p-2}dx \n \\
&&:=I,
 \eq
where the integral with respect to $y$ in the right hand side is in
the sense of principal value. The dissipation term can be estimated
 \bq\label{dis}
 \lefteqn{\k\int
 (\Lambda ^{2\a}\na )\cdot \na |\na |^{p-2} dx
 \geq \frac{\k}{p} \int \left|\Lambda ^{\a} |\na
 |^{\frac{p}{2}} \right|^2 dx }\hspace{.2in}\n \\
 &\geq& \frac{\k C_\a}{p} \left(\int |\na |^{\frac{p}{1-\a}} dx
 \right)^{1-\a}=\frac{\k C_\a}{p}\|\na
 \|_{L^{\frac{p}{1-\a}}} ^p,
 \eq
 where we used Lemma 2.4 of \cite{CC} and the embedding
 $L^2_{\alpha} (\mathbb R^2)\hookrightarrow L^{\frac{2}{1-\alpha}}
 (\mathbb R^2)$.
 Next, we estimate $I$  as follows.
 \bqn
 \lefteqn{I=\int\int (\xi ^ \bot (x,t)\cdot \yh ) [\xxy \cdot \xi^\bot (x,t)]|\nao \th (x+y, t)|
 \frac{dy}{|y|^{1+\beta}} |\nao \th (x,t ) |^p dx}\n \hspace{.0in}\\
 &&=\int\int (\xi ^\bot (x,t)\cdot \yh ) [\xxy -\xx ]\cdot\xi^\bot (x,t)|\nao \th (x+y, t)|
 \frac{dy}{|y|^{1+\beta}} |\nao\th (x,t ) |^p dx \n\\
&&\leq \int\int |\xxy -\xx | |\nao \th
(x+y,t)|\frac{dy}{|y|^{\frac{2+(\beta-1 +s)q}{q} +\frac{2-sq'}{q'}}}
|\nao \th (x,t) |^p dx \n\\
 &&\leq
 \int \left(\int \frac{|\xxy -\xx |^q}{|y|^{2+(\beta-1+s) q}} dy\right)^{\frac{1}{q}}
 \left( \int \frac{|\nao \th (x+y,t)|^{q'}}{ |y|^{2-s q'}} dy \right)^{\frac{1}{q'}} |\nao \th |^p dx \n \\
 &&\leq \|\xi \fn \left\|\{I_{s q'} ( |\nao \th |^{q'}) \}^{\frac{1}{q'}}
 \right\|_{L^{\tilde{p}_2}} \|\nao \th\|^p _{L^{p_3}},\n \\
\eqn
 where we used the fact $\xx \cdot\xi^\bot (x,t)=0$ in the second
 equality, and  H\"{o}lder's inequality in the second and the
 third inequalities with the exponents satisfying
 \bb\label{pcon}
 \frac{1}{p_1} +\frac{1}{\tilde{p}_2} +\frac{p}{p_3}=1, \qquad \frac{1}{q} +\frac{1}{q'} =1,
 \ee
 and $I_{a} (\cdot ) $, $0<a <2$, is the operator defined by the Riesz
  potential. We also set
  \bb\label{sigma}
   \sigma=\beta-1 +s
   \ee
  in the last inequality. After this, we apply Hardy-Littlewood-Sobolev inequality and Young's
  inequality  to estimate $I$, which is
  similar to the proof of Theorem 1.2 of \cite{Cha}, and  deduce
 \bb\label{last1}
 \frac{d}{dt} \|\nao \th(t)\|_{L^p} ^p +\frac{\k C_\a }{2} \| \nao
 \th(t)
\|_{L^{\frac{p}{1-\a}}} ^p \leq
 C\|\xi (t)\fn ^Q\|\nao \th (t)\|_{L^{p_2}}
^Q \|\nao \th (t)\|_{L^p} ^p,
 \ee
 where we set
 \bb\label{Q} Q=\frac{2\a p_1p_2}{(2\a+s )p_1 p_2 -2p_1 -2p_2},
 \ee
 which need to satisfy
 \bb\label{indices}
  \frac{1}{r_1}+\frac{1}{r_2}\leq \frac{1}{Q}.
 \ee
 We note that (\ref{indices}) is equivalent to
 $$
\frac{1}{p_1} + \frac{1}{p_2} + \frac{\a}{r_1} +\frac{\a}{r_2}
  \leq \a+\frac12(1+\sigma-\beta)
  $$
after substituting $Q$ and $s$ from (\ref{Q}) and (\ref{sigma})
 respectively
 into (\ref{indices}).
Since
$$
\int_0 ^T \|\xi (t)\fn ^Q\|\nao \th (t)\|_{L^{p_2}} ^Q dt
 \leq \left(\int_0 ^T\|\xi (t)\fn ^{r_1} dt\right)^{\frac{Q}{r_1}}
 \left(\int_0 ^T\|\nao \th (t)\|_{L^{p_2}} ^{r_2}
 dt\right)^{\frac{Q}{r_2}} <\infty
 $$
 by our hypothesis,
 The inequality (\ref{last1}) leads us to
$$\int_0 ^T \| \nao \th
\|_{L^{\frac{p}{1-\a}}} ^p dt <\infty.
$$
Now applying Theorem \ref{crit30}, we conclude the proof.
\end{proof}

\vskip .4in
\section*{Acknowledgements}
Chae's research was partially supported by NRF grant No.2006-0093854. Constantin's research was partially supported by NSF grant DMS 0804380. Wu's research was partially supported by NSF grant DMS 0907913. Wu thanks the Department of Mathematics at Sungkyunkwan University for its hospitality during his visit there, and thanks Professor Changxing Miao for discussions.

\vskip .4in

\end{document}